\documentclass{amsart}
\usepackage[mathscr]{eucal}

\newtheorem{theorem}{Theorem}[section]
\newtheorem{lemma}[theorem]{Lemma}

\theoremstyle{definition}

\newtheorem{cor}[theorem]{Corollary}

\theoremstyle{remark}
\newtheorem{remark}[theorem]{Remark}

\numberwithin{equation}{section}



\begin{document}
	
	\title[ Joint numerical radius for tuples] {{ Joint numerical radius of Tuples: Extreme points, subdifferential set and Gateaux derivative }}
	
	
	
\author[Arpita Mal]{Arpita Mal}

\address[]{Dhirubhai Ambani University\\ Gandhinagar-382007\\ India.}
\email{arpitamalju@gmail.com}

\thanks{ The author would like to thank DST, Govt. of India for the financial support in the form of Inspire Faculty Fellowship (DST/INSPIRE/04/2022/001207).}


\subjclass[2020]{Primary 15A60, 58C20, Secondary  47A12, 46B28}
\keywords{ Tuples of operators; Extreme points; Gateaux derivative; Joint numerical radius; Subdifferential set;   Birkhoff-James orthogonality}



\date{}
\maketitle
\begin{abstract}
Suppose $\mathcal{Z}$ is the space of all tuples of operators on a finite-dimensional Banach space endowed with the joint numerical radius norm. We obtain the structure of the extreme points of the dual unit ball of $\mathcal{Z}.$ Using this, we derive an expression for the subdifferential set of the joint numerical radius of a tuple in $\mathcal{Z}.$ Applying this expression, we characterize smooth tuples and Birkhoff-James orthogonality in $\mathcal{Z}.$ Finally, we obtain the Gateaux derivative of the  joint numerical radius of a tuple.  
\end{abstract}

\section{Introduction}
The purpose of this article is to derive an expression for the Gateaux derivative of the  joint numerical radius of a tuple of operators on a finite-dimensional Banach space. The main tool for this is an expression for the set of supporting linear functionals (which is precisely the subdifferential set) of a tuple of operators on a finite-dimensional Banach space, equipped with the joint numerical radius. To proceed further, let us introduce the necessary notations and terminologies.\\
Throughout the article, the symbol $\mathcal{X}$ denotes the Banach space $(\mathcal{X},\|\cdot \|)$ over the field $\mathbb{F}$ and $\mathcal{X}^*$ denotes its dual space. Suppose $B_\mathcal{X}$ and $S_{\mathcal{X}}$ are the closed unit ball and the unit sphere of $\mathcal{X}$ respectively. For $x,y\in \mathcal{X},$ the right-hand and left-hand Gateaux derivatives of $x$ in the direction of $y$ are denoted by $G_+(x,y)$ and $G_-(x,y),$ respectively and defined as 
\[G_+(x,y)=\lim_{t\to 0+}\frac{\|x+ty\|-\|x\|}{t},\quad G_-(x,y)=\lim_{t\to 0-}\frac{\|x+ty\|-\|x\|}{t}.\]
It is well-known that both the limits exist. The norm of $\mathcal{X}$ is said to be Gateaux differentiable at $x$ if $G_+(x,y)=G_-(x,y)$ holds for all $y\in \mathcal{X}.$  In this case, the Gateaux derivatives of $x$ in the direction of $y$ is $G(x,y)=G_+(x,y)=G_-(x,y).$ Recall that a non-zero vector $x$ is said to be smooth if the set $$J(x):=\{f\in \mathcal{X}^*:\|f\|=1,f(x)=\|x\|\}$$ is singleton. It is well-known that $x$ is smooth if and only if the norm is Gateaux differentiable at $x,$ and in this case, 
\[G(x,y)=\Re\{f(y)\}, \text{ where } J(x)=\{f\}.\] On the other hand, for arbitrary $x,y\in \mathcal{X},$
\begin{equation}\label{eq-03}
	\begin{split}
	G_+(x,y)=&\max\{\Re\{f(y)\}:f\in J(x)\}, \text{ and } \\
	G_-(x,y)=&\min\{\Re\{f(y)\}:f\in J(x)\}.
	\end{split}
\end{equation} 
In general, an element of $J(x)$ is called a supporting linear functional of $x.$ It is easy to observe that $f\in J(x)$ if and only if $\|y\|-\|x\|\geq \Re\{f(y-x)\}$ for all $y\in \mathcal{X}.$ Therefore, $J(x)$ is also known as the subdifferential set of the norm at $x.$ Over the years, Gateaux derivative and smoothness of operators on a Banach space have been studied (see \cite{A,H,K,MPS,S}).  Another area of independent interest is the numerical radius of operators. Suppose $T$ is a bounded linear operator on a Hilbert space $\mathcal{H}.$ Then the numerical radius $w(T)$ of $T$ is defined as 
\[w(T):=\sup\{|\langle Tx,x\rangle|:x\in S_\mathcal{H}\}.\]
Analogously, if $T$ is a bounded linear operator on $\mathcal{X},$ then the numerical radius $w(T)$ of $T$ is 
\[w(T):=\sup\{|x^*(Tx)|:(x,x^*)\in \Pi \},\]
 where $\Pi=\{(x,x^*):x^*\in S_{\mathcal{X}^*},x\in S_\mathcal{X},x^*(x)=1\}.$ Suppose $\mathcal{L}(\mathcal{X})$ is the space of all bounded linear operators on $\mathcal{X}.$ It is well-known that if $\mathcal{X}$ is a complex Banach space, then $w(\cdot)$ is a norm on $\mathcal{L}(\mathcal{X}).$ There are some real Banach spaces also such that $w(\cdot)$ is a norm on $\mathcal{L}(\mathcal{X}).$ Throughout the article, without mentioning further, we only consider those Banach spaces $\mathcal{X}$ such that $w(\cdot)$ is a norm on $\mathcal{L}(\mathcal{X}).$ Suppose $T_1,\ldots,T_d\in \mathcal{L}(\mathcal{H}).$ Consider the tuple $\mathcal{T}=(T_1,\ldots,T_d):\mathcal{H}\to \oplus_{i=1}^d\mathcal{H}$ defined as $\mathcal{T}x=(T_1x,\ldots,T_dx).$ For $1<p<\infty,$ the joint numerical radius $w_p(\mathcal{T})$ of the tuple $\mathcal{T}$ is defined as 
 \[w_p(\mathcal{T}):=\sup\bigg\{\Big(\sum_{i=1}^d|\langle T_ix,x\rangle|^p\Big)^\frac{1}{p}:x\in S_{\mathcal{H}}\bigg\}.\]
 Analogously, if $T_1,\ldots,T_d\in \mathcal{L}(\mathcal{X}),$ then for the tuple $\mathcal{T}=(T_1,\ldots,T_d):\mathcal{X}\to \oplus_{i=1}^d\mathcal{X},$ 
 $w_p(\mathcal{T})$ is defined in \cite{M} as
 \[w_p(\mathcal{T}):=\sup\bigg\{\Big(\sum_{i=1}^d|x^*( T_ix)|^p\Big)^\frac{1}{p}:(x,x^*)\in \Pi\bigg\}.\]
 It is easy to observe that the joint numerical radius $w_p(\cdot)$ defines a norm on the space of tuples of operators. Suppose $\mathcal{L}(\mathcal{X})^d$ denotes the space of all $d$-tuples of bounded linear operators on $\mathcal{X}.$ Unless otherwise mentioned, we always assume that $\mathcal{Z}=(\mathcal{L}(\mathcal{X})^d,w_p(\cdot)),$ the space $\mathcal{L}(\mathcal{X})^d$ equipped with the joint numerical radius. For a tuple $\mathcal{T}\in \mathcal{Z},$ we denote the set of supporting linear functionals of $\mathcal{T}$ by $J_{w_p}(\mathcal{T}),$ that is,
 \[J_{w_p}(\mathcal{T}):=\{f\in S_{\mathcal{Z}^*}:f(\mathcal{T})=w_p(\mathcal{T})\}.\]
 In \cite{GS}, the authors studied the set $J_{w_2}(\mathcal{T}),$ when $\mathcal{T}\in (\mathcal{L}(\mathcal{H})^d,w_2(\cdot))$ and $\dim(\mathcal{H})<\infty.$ Using this, they studied Birkhoff-James orthogonality on the space $(\mathcal{L}(\mathcal{H})^d,w_2(\cdot)).$ Recall that for $x,y\in \mathcal{X},$ $x$ is Birkhoff-James orthogonal \cite{B,J} to $y,$ if $\|x+\lambda y\|\geq \|x\|$ for all scalars $\lambda.$ 
 In \cite{M2}, the author studied Birkhoff-James orthogonality on $\mathcal{Z},$ for the case $d=1.$ Extreme point methods were used for this. Suppose $E_{\mathcal{X}}$ denotes the set of all extreme points of $B_{\mathcal{X}}.$
 In this article, we continue the study of \cite{GS,M2} in a more general setting and also improve some results significantly. Subdifferential sets of many norm functions have  been explored by several mathematicians. For example see \cite{GS,G,W,Z}. Here following a different approach, namely by finding the extreme points of $B_{\mathcal{Z}^*},$ we derive the subdifferential set of the joint numerical radius of $\mathcal{T}\in \mathcal{Z}.$ This shows the usefulness of extreme point methods in the study.  \\
 
 In Section 2, we first obtain the structure of $E_{\mathcal{Z^*}}$ for $d\geq 1.$ It extends and improves several previously known results, for example \cite[Th. 2.1 \& Th. 2.3]{M2} and \cite[Th. 4.6]{DMP}.   Using this, in Theorem \ref{th-03}, we obtain an expression for $J_{w_p}(\mathcal{T}),$ where $\mathcal{T}\in \mathcal{Z}.$ As a corollary, we derive $G_+(\mathcal{T},\mathcal{S})$ and $G_-(\mathcal{T},\mathcal{S}),$ where $\mathcal{T},\mathcal{S}\in \mathcal{Z}.$ We show that \cite[Th. 1.1]{GS} follows as a special case of Theorem \ref{th-03}. In Corollary \ref{cor-09}, we characterize Birkhoff-James orthogonality in $\mathcal{Z}$ using the expression for $J_{w_p}(\mathcal{T}).$ We show that \cite[Th. 1.2 \& Th. 1.3]{GS} and \cite[Th. 3.1]{M2} are particular cases of Corollary \ref{cor-09}. Finally in Section 4, we characterize the smooth points of $\mathcal{Z}$ and obtain an expression for $G(\mathcal{T},\mathcal{S}),$ if $\mathcal{T}$ is smooth. 

\section{Extreme points of the unit ball of $(\mathcal{L}(\mathcal{X})^d,w_p(\cdot))^*$}
The extreme points in dual of an operator space are of independent interest to several mathematicians. This is a crucial concept in the study of geometry of operators. In \cite{LO,RS} and in \cite{M2} the authors explored this area when the operator space is endowed with the usual operator norm and the numerical radius norm, respectively.  In this section, we explore this problem on the space of tuples of operators equipped with the joint numerical radius norm. We begin with the following two lemmas. The first lemma is an easy consequence of the Hahn-Banach theorem. For the reader's convenience, we include a proof here. Henceforth we assume $\frac{1}{p}+\frac{1}{q}=1.$
\begin{lemma}\label{lem-01}
Let $x=(x_1,\ldots,x_d)\in \ell_p^d.$ Then $$\|x\|_p=\sup\bigg\{\Big|\sum_{i=1}^d\alpha_ix_i\Big|:(\alpha_1,\ldots,\alpha_d)\in B_{\ell_q^d}\bigg\}.$$	
\end{lemma}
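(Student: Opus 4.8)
The plan is to prove this standard duality fact via the Hahn--Banach theorem, as the text announces. First I would fix $x=(x_1,\ldots,x_d)\in\ell_p^d$ and let $M$ denote the right-hand side, namely the supremum of $\bigl|\sum_{i=1}^d\alpha_i x_i\bigr|$ over $(\alpha_1,\ldots,\alpha_d)\in B_{\ell_q^d}$. The inequality $M\le \|x\|_p$ is immediate from H\"older's inequality: for any $(\alpha_i)\in B_{\ell_q^d}$ we have $\bigl|\sum_i\alpha_i x_i\bigr|\le \sum_i|\alpha_i||x_i|\le \|(\alpha_i)\|_q\,\|x\|_p\le \|x\|_p$. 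So the content is the reverse inequality $\|x\|_p\le M$.

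For the reverse inequality there are two natural routes. The direct one: if $x=0$ there is nothing to prove, so assume $x\ne 0$ and exhibit an explicit near-optimal (indeed optimal) choice of $(\alpha_i)$. Writing $x_i=|x_i|e^{i\theta_i}$ (with $\theta_i$ arbitrary when $x_i=0$), set $\alpha_i = e^{-i\theta_i}|x_i|^{p-1}/\|x\|_p^{p-1}$. Then $\sum_i|\alpha_i|^q = \sum_i |x_i|^{(p-1)q}/\|x\|_p^{(p-1)q} = \sum_i|x_i|^p/\|x\|_p^p = 1$, using $(p-1)q=p$, so $(\alpha_i)\in S_{\ell_q^d}\subset B_{\ell_q^d}$; and $\sum_i\alpha_i x_i = \sum_i |x_i|^p/\|x\|_p^{p-1} = \|x\|_p$. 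Hence $M\ge\|x\|_p$, completing the proof. Alternatively, one invokes Hahn--Banach: the functional $\lambda\mapsto\sum_i\lambda_i x_i$ on $\ell_q^d$ has norm $\|x\|_p$ because $(\ell_q^d)^*=\ell_p^d$ isometrically, and its norm is exactly the supremum over $B_{\ell_q^d}$; since the paper says ``an easy consequence of the Hahn--Banach theorem,'' I would phrase it through the identification $(\ell_q^d)^* = \ell_p^d$, perhaps citing that the supremum defining the dual norm is attained on the (weak-* compact, finite-dimensional) unit ball.

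There is essentially no obstacle here; the only mild point to be careful about is the scalar field. If $\mathbb{F}=\mathbb{R}$ one drops the phases $e^{i\theta_i}$ and uses $\alpha_i=\operatorname{sgn}(x_i)|x_i|^{p-1}/\|x\|_p^{p-1}$; if $\mathbb{F}=\mathbb{C}$ the phase correction above handles it, and the absolute value in the statement is what makes the complex case work without change. I would present the explicit-extremizer argument since it is self-contained, gives attainment of the supremum for free, and keeps the exposition elementary; the Hahn--Banach remark can be kept as a one-line justification that the displayed supremum is exactly the norm of the associated functional on $\ell_q^d$.
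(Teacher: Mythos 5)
Your proposal is correct and follows essentially the same route as the paper: H\"older's inequality for the upper bound and the explicit extremizer $\alpha_i=\overline{x_i}|x_i|^{p-2}/\|x\|_p^{p-1}$ (your phase-corrected formula is the same functional written differently) for the reverse inequality. The only cosmetic differences are your explicit treatment of the case $x=0$ and the side remark about the $(\ell_q^d)^*=\ell_p^d$ duality, neither of which changes the argument.
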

\begin{proof}
Suppose $y=(\alpha_1,\ldots,\alpha_d)\in B_{\ell_q^d}.$ Then by Holder's inequality, \[\Big|\sum_{i=1}^d\alpha_ix_i\Big|\leq \sum_{i=1}^d|\alpha_i||x_i|\leq \|y\|_q\|x\|_p\leq \|x\|_p,\]
which implies that	$\sup\bigg\{\Big|\sum_{i=1}^d\alpha_ix_i\Big|:(\alpha_1,\ldots,\alpha_d)\in B_{\ell_q^d}\bigg\}\leq \|x\|_p.$ On the other hand, choosing $\alpha_i=\frac{\overline{x_i}|x_i|^{p-2}}{\|x\|_p^{p-1}}$ for $1\leq i\leq d,$ we get $(\alpha_1,\ldots,\alpha_d)\in B_{\ell_q^d}$ and $\Big|\sum_{i=1}^d\alpha_ix_i\Big|=\|x\|_p.$ This proves the reverse inequality.
\end{proof}
In the next lemma, we obtain an alternative expression for the joint numerical radius. In what follows the notation $(\alpha_1x^*\otimes x,\ldots,\alpha_dx^*\otimes x)$ denotes the linear functional on $\mathcal{L}(\mathcal{X})^d$ defined as $$(\alpha_1x^*\otimes x,\ldots,\alpha_dx^*\otimes x)(T_1,\ldots,T_d):=\sum_{i=1}^d\alpha_i(x^*(T_ix)),$$ where $\alpha_i \in \mathbb{F}$ for $1\leq i\leq d,$ $x\in \mathcal{X}, x^*\in \mathcal{X}^*$  and $(T_1,\ldots,T_d)\in \mathcal{L}(\mathcal{X})^d.$ 
\begin{lemma}
	Let $\mathcal{T}=(T_1,\ldots,T_d)\in \mathcal{Z}.$ Then 
\[w_p(\mathcal{T})=\sup\bigg\{\Big|\sum_{i=1}^d\alpha_i(x^*(T_ix))\Big|:(\alpha_1x^*\otimes x,\ldots,\alpha_dx^*\otimes x)\in \mathcal{A}_d\bigg\},\] where 
\begin{equation}\label{eq-02}
\mathcal{A}_d=\{(\alpha_1x^*\otimes x,\ldots,\alpha_dx^*\otimes x):(\alpha_1,\ldots,\alpha_d)\in B_{\ell_q^d}, (x,x^*)\in \Pi\}.
\end{equation}
\end{lemma}
\begin{proof}
Let 	$(\alpha_1x^*\otimes x,\ldots,\alpha_dx^*\otimes x)\in \mathcal{A}_d.$ Suppose $y=(\alpha_1,\ldots,\alpha_d)$ and $z=(x^*(T_1x),\ldots,x^*(T_dx)).$ Then 
 \[\Big|\sum_{i=1}^d\alpha_i(x^*(T_ix))\Big|\leq \sum_{i=1}^d|\alpha_i||x^*(T_ix)|\leq \|y\|_q\|z\|_p\leq \|z\|_p\leq w_p(\mathcal{T}),\]
 which implies that $$\sup\bigg\{\Big|\sum_{i=1}^d\alpha_i(x^*(T_ix))\Big|:(\alpha_1x^*\otimes x,\ldots,\alpha_dx^*\otimes x)\in \mathcal{A}_d\bigg\}\leq w_p(\mathcal{T}).$$
 For the reverse inequality, observe that for $(z,z^*)\in \Pi,$ 
 \begin{eqnarray*}
 	\Big(\sum_{i=1}^d|z^*(T_iz)|^p\Big)^{\frac{1}{p}}&=&\|(z^*(T_1z),\ldots,z^*(T_dz))\|_p\\
 	&=&\sup\bigg\{\Big|\sum_{i=1}^d\alpha_iz^*(T_iz)\Big|:(\alpha_1,\ldots,\alpha_d)\in B_{\ell_q^d}\bigg\}, ~(\text{by Lemma \ref{lem-01}})\\
 	&\leq&\sup\bigg\{\Big|\sum_{i=1}^d\alpha_i(x^*(T_ix))\Big|:(\alpha_1x^*\otimes x,\ldots,\alpha_dx^*\otimes x)\in \mathcal{A}_d\bigg\}\\
 	\Rightarrow w_p(\mathcal{T})&\leq& \sup\bigg\{\Big|\sum_{i=1}^d\alpha_i(x^*(T_ix))\Big|:(\alpha_1x^*\otimes x,\ldots,\alpha_dx^*\otimes x)\in \mathcal{A}_d\bigg\}.
 	\end{eqnarray*}
\end{proof}

 In the following theorem, we study the extreme points of $B_{\mathcal{Z}^*}.$ We follow the idea of \cite[Th. 2.1]{M2}. Here we need the notion of polar and prepolar of sets. For $A\subseteq \mathcal{X},$ the polar of $A,$ denoted as $A^o,$ is defined as $$A^o=\{x^*\in \mathcal{X}^*:|x^*(a)|\leq 1 ~\forall~a\in A\},$$
  whereas for $B\subseteq \mathcal{X}^*,$ the prepolar of $B,$ denoted as ${}^oB,$ is defined as   \[{}^oB=\{x\in \mathcal{X}:|b^*(x)|\leq 1 ~\forall~b^*\in B\}.\] Moreover, $co(A)$ denotes the convex hull of $A.$
 \begin{theorem}\label{th-01}
 Suppose $\mathcal{A}_d$ is as in (\ref{eq-02}). Then $E_{\mathcal{Z}^*}\subseteq  {\overline{\mathcal{A}_d}}^{w*}.$
 \end{theorem}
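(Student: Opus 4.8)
The plan is to exhibit $\mathcal{A}_d$ as a norming subset of the dual unit ball and then combine the bipolar theorem with Milman's partial converse of the Krein--Milman theorem. As a first step I would record two elementary features of $\mathcal{A}_d$. It is balanced: for a scalar $\lambda$ with $\abs{\lambda}\le 1$ and $(\alpha_1,\dots,\alpha_d)\in B_{\ell_q^d}$ we have $\|(\lambda\alpha_1,\dots,\lambda\alpha_d)\|_q=\abs{\lambda}\,\|(\alpha_1,\dots,\alpha_d)\|_q\le 1$, so $\lambda\bigl(\alpha_1x^*\otimes x,\dots,\alpha_dx^*\otimes x\bigr)\in\mathcal{A}_d$ (in particular $0\in\mathcal{A}_d$, taking all $\alpha_i=0$). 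It is also contained in $B_{\mathcal{Z}^*}$: for $f=(\alpha_1x^*\otimes x,\dots,\alpha_dx^*\otimes x)\in\mathcal{A}_d$ and $\mathcal{T}=(T_1,\dots,T_d)\in\mathcal{Z}$, Hölder's inequality together with $(x,x^*)\in\Pi$ gives $\abs{f(\mathcal{T})}=\bigl|\sum_{i=1}^d\alpha_i x^*(T_ix)\bigr|\le\|(\alpha_i)_i\|_q\,\|(x^*(T_ix))_i\|_p\le w_p(\mathcal{T})$, whence $\|f\|_{\mathcal{Z}^*}\le 1$.

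Next I would invoke the preceding lemma, which states $w_p(\mathcal{T})=\sup\{\abs{f(\mathcal{T})}:f\in\mathcal{A}_d\}$, to identify the prepolar of $\mathcal{A}_d$. A tuple $\mathcal{T}$ satisfies $\abs{f(\mathcal{T})}\le 1$ for every $f\in\mathcal{A}_d$ exactly when $w_p(\mathcal{T})\le 1$, so ${}^o\mathcal{A}_d=B_{\mathcal{Z}}$. Taking polars and using the standard identity $(B_{\mathcal{Z}})^o=B_{\mathcal{Z}^*}$, the bipolar theorem (applied to the subset $\mathcal{A}_d$ of $\mathcal{Z}^*$ with its weak* topology) shows that $B_{\mathcal{Z}^*}=({}^o\mathcal{A}_d)^o$ is the weak*-closed absolutely convex hull of $\mathcal{A}_d$; since $\mathcal{A}_d$ is already balanced, the absolutely convex hull coincides with $co(\mathcal{A}_d)$, so $B_{\mathcal{Z}^*}=\overline{co(\mathcal{A}_d)}^{w*}$. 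Finally, $B_{\mathcal{Z}^*}$ is weak* compact by the Banach--Alaoglu theorem, and we have just displayed it as the weak*-closed convex hull of $\mathcal{A}_d$; Milman's theorem then yields that every extreme point of $B_{\mathcal{Z}^*}$ lies in the weak* closure of $\mathcal{A}_d$, i.e. $E_{\mathcal{Z}^*}=E_{B_{\mathcal{Z}^*}}\subseteq\overline{\mathcal{A}_d}^{w*}$, which is the claim.

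The argument is structurally short, and its only substantive ingredient — the equality $w_p(\mathcal{T})=\sup\{\abs{f(\mathcal{T})}:f\in\mathcal{A}_d\}$, which is precisely what forces ${}^o\mathcal{A}_d=B_{\mathcal{Z}}$ — has already been established in the previous lemma, so no new estimates are needed. The point that requires the most care is the bookkeeping through prepolar, polar and bipolar, and in particular the observation that $\mathcal{A}_d$ is balanced: this is what makes the absolutely convex hull furnished by the bipolar theorem collapse to the ordinary convex hull, and hence lets us feed exactly the set $\mathcal{A}_d$ — rather than some enlargement of it — into Milman's theorem to get the stated containment.
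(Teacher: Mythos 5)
Your proposal is correct and follows essentially the same route as the paper: the identification ${}^o\mathcal{A}_d=B_{\mathcal{Z}}$ via the preceding lemma, the bipolar theorem (using that $\mathcal{A}_d$ is balanced) to get $B_{\mathcal{Z}^*}=\overline{co(\mathcal{A}_d)}^{w*}$, and then Milman's partial converse of Krein--Milman to conclude $E_{\mathcal{Z}^*}\subseteq\overline{\mathcal{A}_d}^{w*}$. The only difference is cosmetic: you verify $\mathcal{A}_d\subseteq B_{\mathcal{Z}^*}$ explicitly via H\"older, which the paper leaves implicit.
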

 \begin{proof}
 	We first claim that ${}^o\mathcal{A}_d=B_{\mathcal{Z}}.$ Indeed
 	\begin{eqnarray*}
 		{}^o\mathcal{A}_d&=& \Big\{\mathcal{T}\in \mathcal{Z}:|(\alpha_1x^*\otimes x,\ldots,\alpha_dx^*\otimes x)(\mathcal{T})|\leq 1 ,\\
 		&&\quad\quad\quad \quad\quad\quad~\forall~ (\alpha_1x^*\otimes x,\ldots,\alpha_dx^*\otimes x)\in \mathcal{A}_d\Big\}\\
 		&=& \Big\{\mathcal{T}=(T_1,\ldots,T_d)\in \mathcal{Z}:\sup_{(\alpha_1x^*\otimes x,\ldots,\alpha_dx^*\otimes x)\in \mathcal{A}_d}|\sum_{i=1}^d\alpha_i(x^*(T_ix))|\leq 1 \Big\}\\
 		&=& \Big\{\mathcal{T}\in \mathcal{Z}:w_p(\mathcal{T})\leq 1 \Big\}\\
 		&=& B_{\mathcal{Z}}.
 	\end{eqnarray*}
 Now observe that
 \begin{eqnarray*}
 	({}^o\mathcal{A}_d)^o&=&(B_{\mathcal{Z}})^o\\
 	&=&\{f\in {\mathcal{Z}}^*:|f(T)|\leq 1~\forall~T\in B_{\mathcal{Z}}\}\\
 	&=&\{f\in {\mathcal{Z}}^*:\|f\|\leq 1\}\\
 	&=&B_{\mathcal{Z}^*}.
 \end{eqnarray*} 
Note that $\mathcal{A}_d$ is a Balanced set, i.e., for any scalar $\lambda$ with $|\lambda|\leq1,$ $\lambda \mathcal{A}_d\subseteq \mathcal{A}_d.$ Therefore, using a consequence of the Bipolar theorem (see \cite[Cor. 1.9, pp. 127]{CONW90}), we get,
\begin{equation*}\label{eq-bipolar}
	B_{\mathcal{Z}^*}=({}^o\mathcal{A}_d)^o=\overline{co(\mathcal{A}_d)}^{w*}.	
\end{equation*}
Clearly, $B_{\mathcal{Z}^*}$ is a weak*compact, convex subset of $\mathcal{Z}^*$ and $\mathcal{A}_d\subseteq B_{\mathcal{Z}^*}.$ Now, it follows from \cite[Th. 7.8, pp. 143]{CONW90}, that $E_{\mathcal{Z}^*}\subseteq {\overline{\mathcal{A}_d}}^{w*}.$
 	\end{proof}
 
 Next we prove the main theorem of this section, which is also the primary tool of this article. We show that the extreme points of $B_{\mathcal{Z}^*}$ assumes a nice form, if $\mathcal{X}$ is finite-dimensional. 
 \begin{theorem}\label{th-02}
 		Let $\dim(\mathcal{X})<\infty.$ Then $E_{\mathcal{Z}^*}\subseteq \mathcal{U}_d,$ where
 	\begin{equation*}
 	\mathcal{U}_d=	\{(\alpha_1x^*\otimes x,\ldots,\alpha_dx^*\otimes x):(\alpha_1,\ldots,\alpha_d)\in S_{\ell_q^d},x^*\in E_{\mathcal{X}^*},x\in E_{\mathcal{X}},x^*(x)=1\}.
 	\end{equation*} 
 \end{theorem}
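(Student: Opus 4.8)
The plan is to bootstrap from Theorem~\ref{th-01}, using that in finite dimensions $\mathcal{A}_d$ is already closed, and then to show that any extreme point lying inside $\mathcal{A}_d$ must have each of its three "coordinates'' $(\alpha_i)$, $x$, $x^*$ extremal. First I would note that since $\dim(\mathcal{X})<\infty$, the space $\mathcal{Z}$ is finite-dimensional, so the weak* topology on $\mathcal{Z}^*$ coincides with the norm topology. The set $\Pi$ is a closed subset of the compact set $S_{\mathcal{X}}\times S_{\mathcal{X}^*}$, hence compact, and $B_{\ell_q^d}$ is compact; since the map $((\alpha_i),(x,x^*))\mapsto(\alpha_1x^*\otimes x,\ldots,\alpha_dx^*\otimes x)$ is continuous, $\mathcal{A}_d$ is compact, so $\overline{\mathcal{A}_d}^{w*}=\mathcal{A}_d$. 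Combining this with Theorem~\ref{th-01} gives $E_{\mathcal{Z}^*}\subseteq\mathcal{A}_d$. Thus I may fix $f\in E_{\mathcal{Z}^*}$ and write $f=(\alpha_1x^*\otimes x,\ldots,\alpha_dx^*\otimes x)$ with $(\alpha_1,\ldots,\alpha_d)\in B_{\ell_q^d}$ and $(x,x^*)\in\Pi$; it then remains to upgrade $B_{\ell_q^d}$ to $S_{\ell_q^d}$, and $x,x^*$ to elements of $E_{\mathcal{X}}$ and $E_{\mathcal{X}^*}$.

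For the scalars, recall that an extreme point of $B_{\mathcal{Z}^*}$ has norm $1$, while for every $\mathcal{T}=(T_1,\ldots,T_d)\in\mathcal{Z}$ Hölder's inequality gives $|f(\mathcal{T})|=\big|\sum_i\alpha_i(x^*(T_ix))\big|\le\|(\alpha_i)\|_q\,\big\|(x^*(T_ix))_i\big\|_p\le\|(\alpha_i)\|_q\,w_p(\mathcal{T})$, so $1=\|f\|\le\|(\alpha_i)\|_q\le1$, forcing $(\alpha_1,\ldots,\alpha_d)\in S_{\ell_q^d}$.

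For the other two coordinates I argue by contradiction using that $f$ is an extreme point of $B_{\mathcal{Z}^*}$. Suppose $x\notin E_{\mathcal{X}}$, say $x=\frac12(y+z)$ with $y\ne z$ in $B_{\mathcal{X}}$. Since $\|x^*\|=1$ and $x^*(x)=1$, the identity $1=\frac12(x^*(y)+x^*(z))$ together with $|x^*(y)|,|x^*(z)|\le1$ forces $x^*(y)=x^*(z)=1$; in particular $\|y\|=\|z\|=1$, so $(y,x^*),(z,x^*)\in\Pi$ and the functionals $f_y=(\alpha_1x^*\otimes y,\ldots,\alpha_dx^*\otimes y)$ and $f_z=(\alpha_1x^*\otimes z,\ldots,\alpha_dx^*\otimes z)$ lie in $\mathcal{A}_d\subseteq B_{\mathcal{Z}^*}$. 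As $f=\frac12(f_y+f_z)$ and $f$ is extreme, $f_y=f_z$, i.e.\ $\sum_i\alpha_i x^*(T_i(y-z))=0$ for every tuple $(T_1,\ldots,T_d)$. Choosing an index $j$ with $\alpha_j\ne0$, a functional $g\in\mathcal{X}^*$ with $g(y-z)=1$, and a vector $u$ with $x^*(u)=1$, and testing against the tuple whose $j$-th entry is the rank-one operator $v\mapsto g(v)u$ and whose remaining entries vanish, yields $\alpha_j=0$, a contradiction. Hence $x\in E_{\mathcal{X}}$. The argument for $x^*\in E_{\mathcal{X}^*}$ is symmetric: if $x^*=\frac12(y^*+z^*)$ with $y^*\ne z^*$ in $B_{\mathcal{X}^*}$, then $x^*(x)=1$ forces $y^*(x)=z^*(x)=1$, so $(x,y^*),(x,z^*)\in\Pi$, extremality of $f$ gives $\sum_i\alpha_i(y^*-z^*)(T_ix)=0$ for all tuples, and testing with a rank-one operator sending $x$ to a vector on which $y^*-z^*$ does not vanish produces a contradiction. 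Therefore $f\in\mathcal{U}_d$.

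I expect the main obstacle to be the bookkeeping in the last paragraph: one must verify that the "halves'' of a convex decomposition of $x$ (or $x^*$) remain admissible, i.e.\ that the resulting pairs stay in $\Pi$, which relies on the norming identity forcing $x^*(y)=x^*(z)=1$; and then one must pass from an equality of linear functionals on $\mathcal{L}(\mathcal{X})^d$ to an equality of vectors, which is exactly where plugging in arbitrary rank-one tuples is needed. The reduction $\overline{\mathcal{A}_d}^{w*}=\mathcal{A}_d$ and the fact that extreme points of $B_{\mathcal{Z}^*}$ have norm $1$ are routine once finite-dimensionality is invoked.
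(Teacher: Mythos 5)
Your proof is correct and follows the same overall strategy as the paper: reduce to $E_{\mathcal{Z}^*}\subseteq\mathcal{A}_d$ via Theorem~\ref{th-01} and compactness of $\mathcal{A}_d$ (which kills the weak* closure in finite dimensions), then use extremality of $f$ to upgrade each coordinate. Where you diverge is in how the three sub-steps are executed, and your versions are somewhat more self-contained. For compactness you observe directly that $\mathcal{A}_d$ is the continuous image of the compact set $B_{\ell_q^d}\times\Pi$, whereas the paper imports the compactness of $\mathcal{A}_1$ from the proof of \cite[Th.~2.3]{M2} and checks that $\mathcal{A}_d$ is closed in $\mathcal{A}_1\times\cdots\times\mathcal{A}_1$. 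For the scalars you use H\"older's inequality to get $\|f\|\leq\|(\alpha_i)\|_q$ together with the fact that extreme points of the unit ball have norm one, while the paper instead writes $f$ as the convex combination $\|y\|_q f_1+(1-\|y\|_q)f_2$ with $f_2=0$ and invokes extremality; both are short, yours avoids introducing the auxiliary functionals. Finally, for $x\in E_{\mathcal{X}}$ (and symmetrically $x^*\in E_{\mathcal{X}^*}$) the paper deduces $x^*\otimes x=x^*\otimes u=x^*\otimes v$ and then cites \cite[Lem.~2.2]{M2} to conclude $x=u=v$, whereas you extract the contradiction directly by testing the identity $f_y=f_z$ against a rank-one tuple (nonzero only in a slot $j$ with $\alpha_j\neq 0$), which replaces the external lemma by an explicit computation. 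The gain of your route is independence from the auxiliary results of \cite{M2}; the paper's route is marginally shorter because it can quote them.
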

 \begin{proof}
 	Observe that 
 	\begin{eqnarray*}
 	\mathcal{A}_1&=&\{\alpha x^*\otimes x:|\alpha|\leq 1,(x,x^*)\in \Pi\}\\
 	&=&\{x^*\otimes x:x^*\in B_{\mathcal{X}^*},x\in B_{\mathcal{X}},|x^*(x)|=\|x^*\|\|x\|\}.
 	\end{eqnarray*}
 From the proof of \cite[Th. 2.3]{M2} it follows that $\mathcal{A}_1$ is a compact set.  Therefore, the cartesian product $\underbrace{\mathcal{A}_1\times\ldots\times\mathcal{A}_1}_{d}$ is a compact set in the corresponding product topology. Clearly, $\mathcal{A}_d\subseteq \underbrace{\mathcal{A}_1\times\ldots\times\mathcal{A}_1}_{d}.$ It is easy to check that $\mathcal{A}_d$ is a closed subset of $ \underbrace{\mathcal{A}_1\times\ldots\times\mathcal{A}_1}_{d}$ and hence $\mathcal{A}_d$ is compact. Now by Theorem \ref{th-01}, 
 $$E_{\mathcal{Z}^*}\subseteq {\overline{\mathcal{A}_d}}^{w*}=\overline{\mathcal{A}_d}=\mathcal{A}_d.$$
Thus, if $f\in E_{\mathcal{Z}^*},$ then $f=(\alpha_1x^*\otimes x,\ldots, \alpha_dx^*\otimes x)$ for some $(x,x^*)\in \Pi$ and  $y=(\alpha_1,\ldots,\alpha_d)\in B_{\ell_q^d}.$  To complete the proof it only remains to show that $y\in S_{\ell_q^d},$ $x^*\in E_{{\mathcal{X}^*}}$ and $x\in E_{\mathcal{X}}.$ If possible, suppose that $x\notin E_{\mathcal{X}}.$ Then there exist $t\in (0,1), u,v\in B_{\mathcal{X}}$ such that $u\neq x\neq v$ and $x=tu+(1-t)v.$ Thus, from
 $$1=x^*(x)=tx^*(u)+(1-t)x^*(v)\leq t|x^*(u)|+(1-t)|x^*(v)|\leq 1,$$
 we get $x^*(u)=x^*(v)=1,$ that is $(u,x^*),(v,x^*)\in \Pi.$ Therefore, $$(\alpha_1x^*\otimes u,\ldots, \alpha_dx^*\otimes u), ~(\alpha_1x^*\otimes v,\ldots, \alpha_dx^*\otimes v)\in \mathcal{A}_d\subseteq B_{\mathcal{Z}^*}.$$ Moreover, 
 \[(\alpha_1x^*\otimes x,\ldots, \alpha_dx^*\otimes x)=t(\alpha_1x^*\otimes u,\ldots, \alpha_dx^*\otimes u)+(1-t)(\alpha_1x^*\otimes v,\ldots, \alpha_dx^*\otimes v).\] Now from $f=(\alpha_1x^*\otimes x,\ldots, \alpha_dx^*\otimes x)\in E_{\mathcal{Z}^*},$ it follows that $$x^*\otimes x=x^*\otimes u=x^*\otimes v.$$ Therefore, by \cite[Lem. 2.2]{M2},  we have $x=u=v,$ which is a contradiction and so $x\in E_{\mathcal{X}}.$ Similarly, it can be shown that $x^*\in E_{{\mathcal{X}}^*}.$ On the other hand, if $y\notin S_{\ell_q^d},$ then $\|y\|_q<1.$ Since $\|f\|=1,$ $y\neq 0,$ that is $0<\|y\|_q<1.$ Choose $f_1=\Big(\frac{\alpha_1}{\|y\|_q}x^*\otimes x,\ldots, \frac{\alpha_d}{\|y\|_q}x^*\otimes x\Big)$ and $f_2=(0x^*\otimes x,\ldots, 0x^*\otimes x).$ Clearly $f_1,f_2\in \mathcal{A}_d\subseteq B_{\mathcal{Z}^*}$ and
 \[f=(\alpha_1x^*\otimes x,\ldots, \alpha_dx^*\otimes x)=\|y\|_qf_1+(1-\|y\|_q)f_2.\]
 Since $f\in E_{\mathcal{Z}^*},$ so we must have $f=f_1=f_2,$ which implies that $y_i=0$ for $1\leq i\leq d,$ that is $y=0.$ This contradiction proves that $y\in S_{\ell_q^d}.$
 \end{proof}
 We would like to remark here that Theorem \ref{th-02} generalizes \cite[Th. 2.3]{M2}. As a consequence of the previous theorem, we get the following corollary.
 \begin{cor}\label{cor-01}
 		Let $\dim(\mathcal{X})<\infty.$  Then $B_{\mathcal{Z}^*}=co( \mathcal{U}_d),$ where $\mathcal{U}_d$ is as in Theorem \ref{th-02}.
 \end{cor}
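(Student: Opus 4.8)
The plan is to invoke the Minkowski--Krein--Milman theorem in the finite-dimensional setting, combined with Theorem \ref{th-02}. Since $\dim(\mathcal{X})<\infty$, the space $\mathcal{Z}=(\mathcal{L}(\mathcal{X})^d,w_p(\cdot))$ is finite-dimensional, hence so is $\mathcal{Z}^*$, and $B_{\mathcal{Z}^*}$ is a compact convex set. First I would recall that in a finite-dimensional normed space a compact convex set equals the convex hull of its extreme points, with no closure needed (this is where finite-dimensionality is genuinely used, via Carath\'eodory's theorem, which ensures that the convex hull of a compact set is already compact). Thus $B_{\mathcal{Z}^*}=co(E_{\mathcal{Z}^*})$.

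Next, applying Theorem \ref{th-02} we have $E_{\mathcal{Z}^*}\subseteq \mathcal{U}_d$, and therefore $B_{\mathcal{Z}^*}=co(E_{\mathcal{Z}^*})\subseteq co(\mathcal{U}_d)$. This gives one of the two inclusions immediately.

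For the reverse inclusion I would check that $\mathcal{U}_d\subseteq \mathcal{A}_d$. Indeed, every element of $\mathcal{U}_d$ has the form $(\alpha_1 x^*\otimes x,\ldots,\alpha_d x^*\otimes x)$ with $(\alpha_1,\ldots,\alpha_d)\in S_{\ell_q^d}\subseteq B_{\ell_q^d}$, and since $E_{\mathcal{X}}\subseteq S_{\mathcal{X}}$, $E_{\mathcal{X}^*}\subseteq S_{\mathcal{X}^*}$ and $x^*(x)=1$, the pair $(x,x^*)$ lies in $\Pi$; hence the element belongs to $\mathcal{A}_d$. As noted in the proof of Theorem \ref{th-01} (where it is shown that $\mathcal{A}_d\subseteq B_{\mathcal{Z}^*}$), we obtain $\mathcal{U}_d\subseteq B_{\mathcal{Z}^*}$. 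Since $B_{\mathcal{Z}^*}$ is convex, it follows that $co(\mathcal{U}_d)\subseteq B_{\mathcal{Z}^*}$. Combining the two inclusions yields $B_{\mathcal{Z}^*}=co(\mathcal{U}_d)$.

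I do not anticipate any real obstacle here: the statement is essentially a repackaging of Theorem \ref{th-02} through the Krein--Milman/Minkowski theorem. The only subtlety worth a sentence in the write-up is that one obtains the plain convex hull $co(\mathcal{U}_d)$ rather than its weak$^*$ closure, and this is precisely what the finite-dimensionality hypothesis on $\mathcal{X}$ buys us.
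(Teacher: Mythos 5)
Your argument is correct and is essentially the paper's own proof: in the finite-dimensional setting one has $B_{\mathcal{Z}^*}=co(E_{\mathcal{Z}^*})\subseteq co(\mathcal{U}_d)\subseteq B_{\mathcal{Z}^*}$, the first equality by the Minkowski (finite-dimensional Krein--Milman) theorem, the middle inclusion by Theorem \ref{th-02}, and the last because $\mathcal{U}_d\subseteq\mathcal{A}_d\subseteq B_{\mathcal{Z}^*}$ and $B_{\mathcal{Z}^*}$ is convex. Your write-up merely spells out the two inclusions that the paper leaves implicit.
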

 \begin{proof}
 	Since $\mathcal{X}$ is finite-dimensional,
 	$$B_{\mathcal{Z}^*}=co(E_{\mathcal{Z}^*})\subseteq co(\mathcal{U}_d)\subseteq B_{\mathcal{Z}^*},$$
 	consequently, $B_{\mathcal{Z}^*}=co(\mathcal{U}_d).$ 
 \end{proof}

The reverse inclusion in Theorem \ref{th-02}  also holds for a large class of Banach spaces. Recall that a Banach space $\mathcal{X}$ is said to be strictly convex, if $E_{\mathcal{X}}=S_{\mathcal{X}}.$ Moreover, $x\in S_{\mathcal{X}}$ is an exposed point of $\mathcal{X}$ if there exists a hyperplane $H$ such that $S_{\mathcal{X}}\cap H=\{x\}.$ In particular, if $\mathcal{X}$ is strictly convex, then each unit vector is an exposed point.

\begin{theorem}\label{th-05}
		Let $\dim(\mathcal{X})<\infty.$ Suppose $(x,x^*)\in \Pi$ such that $x\in E_{\mathcal{X}}$ and $x^*\in E_{\mathcal{X}^*}.$ Suppose either of the following holds.\\
		\rm(i) Either $x$ or $x^*$ is smooth.\\
		\rm(ii) $x$ is an exposed point. \\
		Then for each $(\alpha_1,\ldots,\alpha_d)\in S_{\ell_q^d},$  $(\alpha_1x^*\otimes x,\ldots,\alpha_dx^*\otimes x)\in E_{\mathcal{Z}^*}.$
	\end{theorem}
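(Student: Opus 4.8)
The plan is to deduce the statement from Corollary \ref{cor-01}. Since $\dim(\mathcal{X})<\infty$, we have $B_{\mathcal{Z}^*}=co(\mathcal{U}_d)$, so every element of $B_{\mathcal{Z}^*}$ is a \emph{finite} convex combination of members of $\mathcal{U}_d$. Consequently, to prove that $f:=(\alpha_1x^*\otimes x,\ldots,\alpha_dx^*\otimes x)\in E_{\mathcal{Z}^*}$ it suffices to establish the following rigidity: whenever $f=\sum_{i=1}^{n}\lambda_if_i$ with $\lambda_i>0$, $\sum_i\lambda_i=1$ and $f_i\in\mathcal{U}_d$, necessarily $f_i=f$ for every $i$ (any representation $f=\tfrac12(g+h)$ with $g,h\in B_{\mathcal{Z}^*}$ refines to such a combination, which then forces $g=h=f$). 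Write $f_i=(\beta_{i,1}x_i^*\otimes x_i,\ldots,\beta_{i,d}x_i^*\otimes x_i)$ with $\beta_i=(\beta_{i,1},\ldots,\beta_{i,d})\in S_{\ell_q^d}$ and $(x_i,x_i^*)\in\Pi$. Testing the identity $f=\sum_i\lambda_if_i$ on the tuples having a rank-one operator $v\mapsto\eta(v)u$ (for $u\in\mathcal{X}$, $\eta\in\mathcal{X}^*$) in the $k$-th coordinate and $0$ elsewhere --- such tuples span $\mathcal{L}(\mathcal{X})^d$ because $\dim(\mathcal{X})<\infty$ --- gives, for all such $u,\eta$ and all $k$,
\[\alpha_k\,\eta(x)\,x^*(u)=\sum_{i=1}^{n}\lambda_i\,\beta_{i,k}\,\eta(x_i)\,x_i^*(u),\]
equivalently the identity $\big(\eta(x)x^*(u)\big)\,\alpha=\sum_{i}\lambda_i\big(\eta(x_i)x_i^*(u)\big)\,\beta_i$ in $\ell_q^d$, where $\alpha=(\alpha_1,\ldots,\alpha_d)$.

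The next step is to extract rigidity from these identities. Taking $\ell_q^d$-norms and using $\|\alpha\|_q=\|\beta_i\|_q=1$ with Holder's inequality yields $|\eta(x)|\,|x^*(u)|\le\sum_i\lambda_i|\eta(x_i)|\,|x_i^*(u)|$ for all $u,\eta$. I would then choose the test data to make the left side maximal. Treat case (ii) first (it subsumes case (i) with $x^*$ smooth: in finite dimensions, smoothness of $x^*$ means $\{v\in S_{\mathcal{X}}:x^*(v)=1\}=\{x\}$, so $x$ is exposed by $x^*$). Let $\psi\in S_{\mathcal{X}^*}$ expose $x$, i.e.\ $\psi(x)=1$ and $v\in S_{\mathcal{X}},\ \psi(v)=1\Rightarrow v=x$, and put $\eta=\psi$, $u=x$. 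The inequality becomes $1\le\sum_i\lambda_i\le1$, so equality holds throughout, forcing: (a) $|\psi(x_i)|=1$ and $|x_i^*(x)|=1$ for all $i$; and (b) equality in the triangle inequality $\big\|\sum_i\lambda_i(\psi(x_i)x_i^*(x))\beta_i\big\|_q\le\sum_i\lambda_i$. Since $\ell_q^d$ is strictly convex for $1<q<\infty$, (b) forces each summand to be a non-negative multiple of one common unit vector; comparing norms, that multiple is $\lambda_i$, and evaluating the vector identity at $(\psi,x)$ identifies the common unit vector with $\alpha$, so $\beta_i=\overline{\psi(x_i)x_i^*(x)}\,\alpha$. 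Now (a) pins the vectors down: $|\psi(x_i)|=1=\|\psi\|\,\|x_i\|$ together with the exposing property gives $x_i=\omega_ix$ with $|\omega_i|=1$, whereupon $x_i^*(x_i)=1$ forces $x_i^*(x)=\overline{\omega_i}$; hence $\psi(x_i)x_i^*(x)=1$ and $\beta_i=\alpha$.

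To conclude, with $\beta_i=\alpha$ fix $k$ with $\alpha_k\ne0$; the displayed identity becomes $x^*\otimes x=\sum_i\lambda_i(x_i^*\otimes x_i)$ as functionals on $\mathcal{L}(\mathcal{X})$. Substituting $x_i=\omega_ix$ gives $x_i^*\otimes x_i=(\omega_ix_i^*)\otimes x$, and evaluating on rank-one operators (using $x\ne0$) yields $x^*=\sum_i\lambda_i\omega_ix_i^*$, a convex combination of unit vectors; since $x^*\in E_{\mathcal{X}^*}$ this forces $\omega_ix_i^*=x^*$. The phases then cancel: $x_i^*\otimes x_i=(\overline{\omega_i}x^*)\otimes(\omega_ix)=x^*\otimes x$ and $\beta_i=\alpha$, i.e.\ $f_i=f$, which is the desired rigidity. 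Case (i) with $x$ smooth is entirely symmetric, with the roles of $\mathcal{X}$ and $\mathcal{X}^*$ interchanged: one tests with $\eta=x^*$ (the unique element of $J(x)$) and $u=x$, uses $J(x)=\{x^*\}$ to obtain $x_i^*=\omega_ix^*$ (hence $x^*(x_i)=\overline{\omega_i}$ and $\beta_i=\alpha$), and uses $x\in E_{\mathcal{X}}$ at the end to conclude $x_i=\overline{\omega_i}x$, again giving $f_i=f$.

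I expect the main obstacle to be the equality analysis in the second paragraph. It requires simultaneously: exploiting strict convexity of $\ell_q^d$ to pin down the directions $\beta_i$; carefully tracking the unimodular scalars $\psi(x_i)$ (or $x^*(x_i)$) and $x_i^*(x)$ so that they cancel cleanly at the end; and invoking the smoothness or exposedness hypothesis at exactly the place where a norm-attainment equality forces $x_i$ (respectively $x_i^*$) to be a unimodular multiple of $x$ (respectively $x^*$). Once that rigidity is secured, the final convex-combination step using extremality of $x$ or $x^*$ is routine.
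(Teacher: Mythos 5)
Your proposal is correct, and it follows the same overall skeleton as the paper's proof (decompose $f$ via Corollary \ref{cor-01} into a convex combination of elements of $\mathcal{U}_d$, force the $\ell_q$-coefficient vectors to coincide with $(\alpha_1,\ldots,\alpha_d)$, use smoothness/exposedness to make each $x_i$ or $x_i^*$ a unimodular multiple of $x$ or $x^*$, and finish with extremality of $x$ or $x^*$), but the mechanism at the key step is genuinely different. The paper manufactures a witness tuple $\mathcal{T}=(\overline{\alpha_1}|\alpha_1|^{q-2}T,\ldots,\overline{\alpha_d}|\alpha_d|^{q-2}T)$ with $T=x^*(\cdot)x$ (or $u^*(\cdot)x$ in the exposed case), verifies $w_p(\mathcal{T})=1$ and $f\in J_{w_p}(\mathcal{T})$, and extracts the rigidity from the norm-attainment equalities $f_j(\mathcal{T})=1$ via the equality case of H\"older's inequality; you instead evaluate the identity $f=\sum_i\lambda_if_i$ directly on rank-one coordinate tuples and extract the rigidity from equality in the triangle inequality in $\ell_q^d$ at the single test pair $(\psi,x)$ (or $(x^*,x)$), invoking strict convexity of $\ell_q^d$ for $1<q<\infty$. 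Your route is somewhat more self-contained: it never needs the numerical radius of the auxiliary operator nor $J_{w_p}$, and it neatly folds the ``$x^*$ smooth'' case into the exposed case via reflexivity; the paper's route has the side benefit of exhibiting an explicit tuple at which $f$ is a supporting functional, which is in the spirit of the later sections. Two small points of care in your write-up: the inequality $|\eta(x)||x^*(u)|\le\sum_i\lambda_i|\eta(x_i)||x_i^*(u)|$ is just the triangle inequality together with $\|\beta_i\|_q=1$ (not H\"older), and since the paper defines exposedness via a hyperplane $H$ with $S_{\mathcal{X}}\cap H=\{x\}$, you should note (as the paper does through its construction of $u^*$) that this yields an exposing functional $\psi\in S_{\mathcal{X}^*}$ with $\psi(x)=1$ and $\psi(v)=1,\ v\in S_{\mathcal{X}}\Rightarrow v=x$; with that observation your phase bookkeeping and the final extremality step go through exactly as written.
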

\begin{proof}
(i)	 Define $T:\mathcal{X}\to \mathcal{X}$ by 
	\[T(z)=x^*(z) x, ~\forall~z\in \mathcal{X}.\]
Since for $(z,z^*)\in \Pi,$ $|z^*(Tz)|=|x^*(z)||z^*(x)|\leq 1,$ and $x^*(Tx)=1,$ so $w(T)=1.$
Let $\mathcal{T}=(T_1,\ldots,T_d)\in \mathcal{Z},$ where $T_i=\overline{\alpha_i}|\alpha_i|^{q-2}T, ~1\leq i\leq d.$ Then 
\begin{eqnarray*}
w_p(\mathcal{T})&=&\sup\Big\{\Big(\sum_{i=1}^d|z^*(T_iz)|^p\Big)^\frac{1}{p}:(z,z^*)\in \Pi\Big\}\\	
&=& \sup\Big\{\Big(\sum_{i=1}^d|\alpha_i|^q|z^*(Tz)|^p\Big)^\frac{1}{p}:(z,z^*)\in \Pi\Big\}\\
&=& \sup\Big\{|z^*(Tz)|:(z,z^*)\in \Pi\Big\}=w(T)=1.
\end{eqnarray*}
Clearly, $f=\Big(\alpha_1x^*\otimes x,\ldots,\alpha_dx^*\otimes x \Big)\in J_{w_p}(\mathcal{T}).$ If possible, assume that $f\notin E_{\mathcal{Z}^*}.$ Then by Corollary \ref{cor-01}, $f=\sum_{j=1}^mt_jf_j,$ where $(f\neq ) f_j\in \mathcal{U}_d,$ $t_j>0,$ and $t_1+\ldots+t_m=1.$ Now from
\[1=f(\mathcal{T})=\sum_{j=1}^mt_jf_j(\mathcal{T})\leq \sum_{j=1}^mt_j=1,\]
it follows that $f_j(\mathcal{T})=1,$ that is $f_j\in J_{w_p}(\mathcal{T})$ for all $1\leq j\leq m.$  Since $f_j\in \mathcal{U}_d,$ assume that $f_j=\bigg(\alpha_{1j}x_j^*\otimes x_j,\ldots,\alpha_{dj}x_j^*\otimes x_j\bigg),$ where $y_j=(\alpha_{1j},\ldots,\alpha_{dj})\in S_{\ell_q^d},$  $x_j^*\in E_{\mathcal{X}^*},  x_j\in E_{\mathcal{X}}$ and $x_j^*(x_j)=1.$ Therefore, for all $1\leq j\leq m,$
\begin{eqnarray*}
1&=&f_j(\mathcal{T})	\\
&=&\sum_{i=1}^d\alpha_{ij}x_j^*(T_ix_j)\\
&=&x_j^*(Tx_j)\Big(\sum_{i=1}^d\alpha_{ij}\overline{\alpha_i}|\alpha_i|^{q-2}\Big)\\
&\leq&|x_j^*(Tx_j)|\Big(\sum_{i=1}^d|\alpha_{ij}||\alpha_i|^{q-1}\Big)\\
&\leq & \|y_j\|_q\|(|\alpha_1|^{q-1},\ldots,|\alpha_d|^{q-1})\|_p\\
&=&1.
\end{eqnarray*}
This implies that $x_j^*(Tx_j)=1$ and $\alpha_{ij}=\alpha_i.$
Now, 
\begin{equation}\label{eq-04}
1=x_j^*(Tx_j)=x^*(x_j)x_j^*(x)\quad \Rightarrow \quad |x_j^*(x)|=|x^*(x_j)|=1.
\end{equation}
We consider the following two cases separately.
\begin{center}
Case A: $x$ is smooth, $\quad$ Case B: $x^*$ is smooth.
\end{center}
Case A:
Assume that $x$ is smooth. Then $J(x)=\{x^*\}.$ Using (\ref{eq-04}), if $\lambda_jx_j^*(x)=1,$ for some scalar $\lambda_j$ with $|\lambda_j|=1,$ then $\lambda_jx_j^*\in J(x),$ and so $\lambda_jx_j^*=x^*,$ that is, $x_j^*=\overline{\lambda_j}x^*.$ Therefore,
\begin{eqnarray*}
	f&=&\sum_{j=1}^mt_jf_j\\
	\Rightarrow \Big(\alpha_1x^*\otimes x,\ldots,\alpha_dx^*\otimes x \Big)&=&\sum_{j=1}^mt_j\Big(\alpha_1\overline{\lambda_j}x^*\otimes x_j,\ldots,\alpha_d\overline{\lambda_j}x^*\otimes x_j \Big)\\
	\Rightarrow \alpha_ix^*\otimes x&=&\sum_{j=1}^mt_j\alpha_i\overline{\lambda_j}x^*\otimes x_j ,\quad \forall~ 1\leq i\leq d \\
	\Rightarrow \alpha_ix^*\otimes (x-\sum_{j=1}^m t_j\overline{\lambda_j}x_j)&=&0\\
	\Rightarrow x&=&\sum_{j=1}^mt_j \overline{\lambda_j}x_j, \quad (\text{since } \alpha_i\neq 0, \text{ for some } i)\\
	\Rightarrow x&=& \overline{\lambda_j}x_j, \quad \forall~1\leq j\leq m,\quad(\text{since } x\in E_{\mathcal{X}})\\
	\Rightarrow x_j^*\otimes x_j&=& \overline{\lambda_j}x^*\otimes \lambda_jx=x^*\otimes x\\
	\Rightarrow f_j&=&f, \quad \forall\quad 1\leq j\leq m.  
	\end{eqnarray*}
This contradiction proves that $f\in E_{\mathcal{Z}^*}.$\\
Case B: Suppose $x^*$ is smooth. Then from (\ref{eq-04}) and \cite[Ch. 6]{MPS}, it follows that $x_j=\mu_jx$ for some scalar $\mu_j$ with $|\mu_j|=1.$ Now, proceeding similarly as Case A, it is easy to check that $f=f_j$ for all $1\leq j\leq m.$ This contradiction proves that $f\in E_{\mathcal{Z}^*}.$\\

(ii) Suppose $x$ is an exposed point and $H$ is a hyperplane such that $H\cap S_{\mathcal{X}}=\{x\}.$ Let $H_1$ be the hyperspace and $a\in \mathcal{X}$ such that $H=a+H_1.$ Consider $u^*\in S_{\mathcal{X}^*}$ defined as follows:
\[u^*(x)=1, \text{ and }u^*(h)=0, \text{ for all } h\in H_1.\] It is easy to check that $u^*\in S_{\mathcal{X}^*}$ and if $|u^*(y)|=1$  for some $y\in S_{\mathcal{X}},$ then $y=\lambda x$ for some scalar $\lambda$ with $|\lambda|=1.$ Thus, $u^*$ is smooth. Define $T:\mathcal{X}\to \mathcal{X}$ by 
\[T(z)=u^*(z) x, ~\forall~z\in \mathcal{X}.\]
Since for $(z,z^*)\in \Pi,$ $|z^*(Tz)|=|u^*(z)||z^*(x)|\leq 1,$ and $x^*(Tx)=1,$ so $w(T)=1.$
Let $\mathcal{T}=(T_1,\ldots,T_d)\in \mathcal{Z},$ where $T_i=\overline{\alpha_i}|\alpha_i|^{q-2}T, ~1\leq i\leq d.$ Now we proceed  as (i). If possible, assume that  $f=\Big(\alpha_1x^*\otimes x,\ldots,\alpha_dx^*\otimes x \Big)\notin E_{\mathcal{Z}^*}.$ Then $f=\sum_{j=1}^mt_jf_j,$ where $(f\neq ) f_j\in \mathcal{U}_d,$ $t_j>0,$ and $t_1+\ldots+t_m=1.$ Similarly as (i), we get the following for $1\leq j\leq m.$
\begin{enumerate}
 \item $f_j\in J_{w_p}(\mathcal(T))$
  \item $f_j=(\alpha_1x_j^*\otimes x_j,\ldots,\alpha_d x_j^*\otimes x_j)$ for some $x_j^*\in E_{\mathcal{X}^*},x_j\in E_{\mathcal{X}}$ and $x_j^*(x_j)=1.$
  \item $x_j^*(Tx_j)=1\Rightarrow u^*(x_j)x_j^*(x)=1\Rightarrow |u^*(x_j)|=1.$
 \end{enumerate}
Therefore, $x_j=\lambda_jx$ for some scalar $\lambda_j$ with $|\lambda_j|=1.$ Now, similarly as (i) it can be proved that $f=f_j$ for all $1\leq j\leq m,$ and so $f\in E_{\mathcal{Z}^*}.$
\end{proof}
Combining Theorem \ref{th-02} and Theorem \ref{th-05}, we immediately get the following corollary. Recall that a finite-dimensional Banach space is said to be polyhedral, if its unit ball has finitely many extreme points.
A unit vector $x$ is said to be $k$-smooth, if $J(x)$ contains exactly $k$ linearly independent functionals.
\begin{cor}\label{cor-11}
		Let $\dim(\mathcal{X})<\infty.$  Suppose either of the following holds.
		\begin{center}
	\rm(i) $\mathcal{X}$ is smooth.
	\rm(ii) $\mathcal{X}^*$ is smooth. 
	\rm(iii) $\mathcal{X}$ is polyhedral.
	\rm(iv) $\dim(\mathcal{X})=2.$
	\end{center}
	Then $E_{\mathcal{Z}^*}= \mathcal{U}_d.$
\end{cor}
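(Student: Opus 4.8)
The plan is to obtain $E_{\mathcal{Z}^*}=\mathcal{U}_d$ as a two-sided inclusion. One inclusion, $E_{\mathcal{Z}^*}\subseteq\mathcal{U}_d$, is already supplied by Theorem \ref{th-02} (which uses only $\dim(\mathcal{X})<\infty$), so the whole content of the corollary is the reverse inclusion $\mathcal{U}_d\subseteq E_{\mathcal{Z}^*}$, and for that I would invoke Theorem \ref{th-05}. Concretely, fix an arbitrary $f=(\alpha_1x^*\otimes x,\ldots,\alpha_dx^*\otimes x)\in\mathcal{U}_d$, so that $(\alpha_1,\ldots,\alpha_d)\in S_{\ell_q^d}$, $x\in E_{\mathcal{X}}$, $x^*\in E_{\mathcal{X}^*}$ and hence $(x,x^*)\in\Pi$. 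It then suffices to check that, under any one of the hypotheses (i)--(iv), the pair $(x,x^*)$ satisfies condition (i) or condition (ii) of Theorem \ref{th-05}; once this is done, Theorem \ref{th-05} gives $f\in E_{\mathcal{Z}^*}$, and since $f$ was arbitrary we conclude $\mathcal{U}_d\subseteq E_{\mathcal{Z}^*}$, whence equality.

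Cases (i) and (ii) are immediate: if $\mathcal{X}$ is smooth then the unit vector $x$ is a smooth point, and if $\mathcal{X}^*$ is smooth then the unit vector $x^*$ is a smooth point, so in either situation hypothesis (i) of Theorem \ref{th-05} holds. For case (iii), if $\mathcal{X}$ is polyhedral then $B_{\mathcal{X}}$ has finitely many extreme points, hence in finite dimensions coincides with their convex hull, i.e.\ $B_{\mathcal{X}}$ is a polytope; every vertex of a polytope is an exposed point (separate the given vertex from the finitely many other vertices by a linear functional), so the extreme point $x$ is exposed and hypothesis (ii) of Theorem \ref{th-05} applies.

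The only case needing an actual argument is (iv). Here I would first establish the auxiliary fact that in a $2$-dimensional normed space every extreme point of $B_{\mathcal{X}}$ is either smooth or exposed; together with conditions (i)--(ii) of Theorem \ref{th-05} this settles case (iv). To prove the fact, let $x\in E_{\mathcal{X}}$ and suppose $x$ is not smooth, so $J(x)$ contains two distinct functionals $h_0,h_1$, and by convexity $g:=\tfrac12(h_0+h_1)\in J(x)$. I claim $g$ exposes $x$. If $y\in B_{\mathcal{X}}$ satisfies $g(y)=1$, then $\tfrac12 h_0(y)+\tfrac12 h_1(y)=1$ with $|h_0(y)|,|h_1(y)|\le 1$; since $1$ is an extreme point of $\{z\in\mathbb{F}:|z|\le 1\}$, this forces $h_0(y)=h_1(y)=1$. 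Hence $(h_0-h_1)(y)=0=(h_0-h_1)(x)$, and as $h_0-h_1$ is a nonzero functional on the $2$-dimensional space $\mathcal{X}$ its kernel is $1$-dimensional over $\mathbb{F}$; thus $y=\lambda x$ for some scalar $\lambda$, and $1=h_0(y)=\lambda$, so $y=x$. Therefore $x$ is exposed whenever it fails to be smooth, completing case (iv).

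Combining the four cases with Theorem \ref{th-02} yields $E_{\mathcal{Z}^*}=\mathcal{U}_d$. The main obstacle, and the only genuinely non-routine ingredient, is the $2$-dimensional fact used in case (iv); cases (i)--(iii) are bookkeeping once Theorems \ref{th-02} and \ref{th-05} are in hand. I would also double-check that the kernel argument in (iv) goes through unchanged for $\mathbb{F}=\mathbb{R}$ and $\mathbb{F}=\mathbb{C}$, which it does, since the only place the field enters is the extremality of $1$ in the scalar unit ball and the dimension count $\dim_{\mathbb{F}}\ker(h_0-h_1)=1$.
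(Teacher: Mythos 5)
Your proof is correct and follows the same skeleton as the paper's: the inclusion $E_{\mathcal{Z}^*}\subseteq\mathcal{U}_d$ comes from Theorem \ref{th-02}, and the reverse inclusion is obtained by checking, case by case, that the pair $(x,x^*)$ attached to an element of $\mathcal{U}_d$ satisfies hypothesis (i) or (ii) of Theorem \ref{th-05}. Cases (i)--(iii) are handled exactly as in the paper; for (iii) you additionally supply the short separation argument behind the assertion, stated without proof in the paper, that every extreme point of a polyhedral unit ball is exposed. The only genuine divergence is case (iv): the paper notes that a non-smooth unit vector of a two-dimensional space is $2$-smooth and then cites \cite[Th. 4.2]{Wo} to conclude it is exposed, whereas you prove the needed two-dimensional fact directly --- averaging two distinct functionals of $J(x)$ to get $g=\tfrac12(h_0+h_1)\in J(x)$, using extremality of $1$ in the scalar unit disc to force $h_0(y)=h_1(y)=1$ whenever $g(y)=1$ with $y\in B_{\mathcal{X}}$, and then the one-dimensionality of $\ker(h_0-h_1)$ to get $y=x$, so that the affine $\mathbb{F}$-hyperplane $\{z:g(z)=1\}$ exposes $x$ in the sense used in the paper. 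Your route buys a self-contained elementary argument, valid verbatim over $\mathbb{R}$ and $\mathbb{C}$, at the cost of a few extra lines; the paper's citation is shorter and situates the fact within the general theory of $k$-smooth points.
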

\begin{proof}
If either (i) or (ii) holds, then the result follows from Theorem \ref{th-02} and  part (i) of Theorem \ref{th-05}.\\
 Observe that in a polyhedral Banach space every extreme point is an exposed point. Now, (iii) follows from 	Theorem \ref{th-02} and  part (ii) of Theorem \ref{th-05}.\\
 (iv) Suppose $\dim(\mathcal{X})=2.$ Let $x\in S_{\mathcal{X}}.$ If $x$ is not smooth, then $J(x)$ has exactly two linearly independent functionals. Hence $x$ is $2$-smooth, and so by \cite[Th. 4.2]{Wo}, $x$ is an exposed point. Thus, every unit vector of $\mathcal{X}$ is either smooth or exposed point. Now, the result follows using Theorem \ref{th-02} and  Theorem \ref{th-05}.
\end{proof}

Note that, if $\mathcal{H}$ is a Hilbert space and $x^*(x)=1,$ where $x\in S_{\mathcal{H}},$ and $ x^*\in S_{\mathcal{H}^*},$ then by the Riesz-Representation theorem, $x^*(z)=\langle z,x\rangle$ for all $z\in \mathcal{H}.$ For this identification on a Hilbert space, we denote the functional $x^*\otimes x$ simply by $x\otimes x.$ To be precise, $x\otimes x(S)=\langle Sx,x\rangle$ for all  $S\in \mathcal{L}(\mathcal{H}).$ Now, since a Hilbert space is smooth, by Corollary \ref{cor-11}, we get the following.

\begin{cor}
Let $\dim(\mathcal{H})<\infty,$ and $\mathcal{Z}=(\mathcal{L}(\mathcal{H})^d,w_p(\cdot)).$ Then 	
\[E_{\mathcal{Z}^*}=\{(\alpha_1x\otimes x,\ldots,\alpha_dx\otimes x):(\alpha_1,\ldots,\alpha_d)\in S_{\ell_q^d},x\in S_{\mathcal{H}}\}.\]
In particular, if $d=1,$ then 
\[E_{\mathcal{Z}^*}=\{\alpha x\otimes x:|\alpha|=1,x\in S_{\mathcal{H}}\}.\]
\end{cor}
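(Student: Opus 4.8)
The plan is to deduce this corollary directly from Corollary \ref{cor-11}, so almost all the work has already been done; what remains is a translation from the Banach-space language of $\mathcal{U}_d$ into Hilbert-space notation via the Riesz representation theorem. First I would recall that every Hilbert space is smooth (its norm comes from an inner product), so hypothesis (i) of Corollary \ref{cor-11} is satisfied and hence $E_{\mathcal{Z}^*}=\mathcal{U}_d$, where
\[
\mathcal{U}_d=\{(\alpha_1x^*\otimes x,\ldots,\alpha_dx^*\otimes x):(\alpha_1,\ldots,\alpha_d)\in S_{\ell_q^d},\ x^*\in E_{\mathcal{H}^*},\ x\in E_{\mathcal{H}},\ x^*(x)=1\}.
\]

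Next I would simplify the index set. Since a Hilbert space is strictly convex (indeed uniformly convex), $E_{\mathcal{H}}=S_{\mathcal{H}}$, and likewise $\mathcal{H}^*$ is a Hilbert space so $E_{\mathcal{H}^*}=S_{\mathcal{H}^*}$. Thus the condition ``$x^*\in E_{\mathcal{H}^*}$, $x\in E_{\mathcal{H}}$, $x^*(x)=1$'' says precisely: $x^*\in S_{\mathcal{H}^*}$, $x\in S_{\mathcal{H}}$, $x^*(x)=1$. By Riesz representation there is a unique $y\in\mathcal{H}$ with $\|y\|=\|x^*\|=1$ and $x^*(z)=\langle z,y\rangle$ for all $z\in\mathcal{H}$; then $x^*(x)=\langle x,y\rangle=1$ with $\|x\|=\|y\|=1$, and the equality case of the Cauchy--Schwarz inequality forces $y=x$. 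Consequently $x^*\otimes x$ is exactly the functional $S\mapsto\langle Sx,x\rangle$, which under the identification introduced just before the statement is denoted $x\otimes x$. Substituting this into the description of $\mathcal{U}_d$ yields
\[
E_{\mathcal{Z}^*}=\{(\alpha_1x\otimes x,\ldots,\alpha_dx\otimes x):(\alpha_1,\ldots,\alpha_d)\in S_{\ell_q^d},\ x\in S_{\mathcal{H}}\},
\]
as claimed. Finally, for $d=1$ one notes that $\ell_q^1$ is just the scalar field with the absolute-value norm, so $S_{\ell_q^1}=\{\alpha\in\mathbb{F}:|\alpha|=1\}$, giving the stated special case $E_{\mathcal{Z}^*}=\{\alpha x\otimes x:|\alpha|=1,\ x\in S_{\mathcal{H}}\}$.

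I do not anticipate a genuine obstacle here: the corollary is essentially a specialization. The only point requiring a little care is the passage ``$x^*(x)=1$, $\|x\|=\|x^*\|=1$ $\Rightarrow$ the representing vector of $x^*$ equals $x$'', which is exactly the equality condition in Cauchy--Schwarz and should be stated explicitly so that the identification $x^*\otimes x=x\otimes x$ is justified; everything else is bookkeeping about which extreme-point sets collapse to the unit sphere in the Hilbert setting.
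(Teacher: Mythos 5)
Your proposal is correct and follows essentially the same route as the paper: invoke Corollary \ref{cor-11}(i) since a Hilbert space is smooth, identify $E_{\mathcal{H}}=S_{\mathcal{H}}$ by strict convexity, and use the Riesz representation together with the Cauchy--Schwarz equality case to rewrite $x^*\otimes x$ as $x\otimes x$. The only difference is that you spell out the Cauchy--Schwarz step explicitly, which the paper leaves implicit in its remark preceding the corollary.
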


We would like to end this section with the following remark.
\begin{remark}
In \cite[Th. 4.6]{DMP}, the authors proved that $E_{\mathcal{Z}^*}=\mathcal{U}_d$ for $d=1,$ assuming $\mathcal{X}$ is a two-dimensional polyhedral Banach space. So Corollary \ref{cor-11}	improves \cite[Th. 4.6]{DMP} significantly. For $d=1,$ Corollary \ref{cor-11} also strengthens \cite[Th. 2.3]{M2} for a large class of Banach spaces.
\end{remark}

\section{Subdifferential set of $w_p(\cdot)$ at $\mathcal{T}$}
In this section, we deal with the subdifferential set of the joint numerical radius of a tuple. Henceforth, the symbol $M_{w_p(\mathcal{T})}$ denotes the joint  numerical radius attaining set of $\mathcal{T}=(T_1,\ldots,T_d),$ that is,
\[M_{w_p(\mathcal{T})}=\bigg\{(x,x^*)\in \Pi:\Big(\sum_{i=1}^d|x^*(T_ix)|^p\Big)^{\frac{1}{p}}=w_p(\mathcal{T})\bigg\}.\]
The main result of this section is as follows.
\begin{theorem}\label{th-03}
	Let $\dim(\mathcal{X})<\infty.$ Then for $\mathcal{T}=(T_1,\ldots,T_d)\in \mathcal{Z},$ 
	\[J_{w_p}(\mathcal{T})=co(\mathcal{B}), \text{ where}\] 
	  \begin{eqnarray*}
		\mathcal{B}&=&\bigg\{\frac{1}{w_p(\mathcal{T})^{p-1}}\bigg(\overline{x^*(T_1x)}|x^*(T_1x)|^{p-2}x^*\otimes x,\ldots,\overline{x^*(T_dx)}|x^*(T_dx)|^{p-2}x^*\otimes x\bigg):\\
		&&\quad \quad \quad \quad \quad \quad(x,x^*)\in M_{w_p(\mathcal{T})}, x\in E_{\mathcal{X}},x^*\in E_{\mathcal{X}^*}\bigg\}.
	\end{eqnarray*}
\end{theorem}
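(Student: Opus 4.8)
The plan is to compute $J_{w_p}(\mathcal{T})$ via the extreme point description of $B_{\mathcal{Z}^*}$ obtained in Corollary \ref{cor-01}. Since $J_{w_p}(\mathcal{T})$ is a weak*-compact convex face of $B_{\mathcal{Z}^*}$, its extreme points are exactly the extreme points of $B_{\mathcal{Z}^*}$ lying in it, i.e. $E_{J_{w_p}(\mathcal{T})} = E_{\mathcal{Z}^*}\cap J_{w_p}(\mathcal{T})$, and hence $J_{w_p}(\mathcal{T}) = co\bigl(E_{\mathcal{Z}^*}\cap J_{w_p}(\mathcal{T})\bigr)$. So the real task reduces to identifying which $f = (\alpha_1 x^*\otimes x,\ldots,\alpha_d x^*\otimes x)\in \mathcal{U}_d$ (where $(\alpha_1,\ldots,\alpha_d)\in S_{\ell_q^d}$, $x\in E_{\mathcal{X}}$, $x^*\in E_{\mathcal{X}^*}$, $x^*(x)=1$) satisfy $f(\mathcal{T}) = w_p(\mathcal{T})$, and showing that this set of $f$'s is precisely $\mathcal{B}$.

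First I would handle the inclusion $\mathcal{B}\subseteq E_{\mathcal{Z}^*}\cap J_{w_p}(\mathcal{T})$. Given $(x,x^*)\in M_{w_p(\mathcal{T})}$ with $x\in E_{\mathcal{X}}$, $x^*\in E_{\mathcal{X}^*}$, set $z_i = x^*(T_ix)$ and $\alpha_i = \overline{z_i}|z_i|^{p-2}/w_p(\mathcal{T})^{p-1}$; the computation from Lemma \ref{lem-01} (the explicit optimizer there) shows $(\alpha_1,\ldots,\alpha_d)\in S_{\ell_q^d}$ and $\sum_i \alpha_i z_i = \|(z_1,\ldots,z_d)\|_p = w_p(\mathcal{T})$, so indeed the corresponding functional is in $\mathcal{U}_d$ and attains $w_p(\mathcal{T})$ at $\mathcal{T}$; hence it lies in $E_{\mathcal{Z}^*}\cap J_{w_p}(\mathcal{T})$. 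Taking convex hulls gives $co(\mathcal{B})\subseteq J_{w_p}(\mathcal{T})$. For the reverse, suppose $f = (\alpha_1 x^*\otimes x,\ldots,\alpha_d x^*\otimes x)\in \mathcal{U}_d$ satisfies $f(\mathcal{T}) = w_p(\mathcal{T})$. Then
\[
w_p(\mathcal{T}) = \Bigl|\sum_{i=1}^d \alpha_i x^*(T_ix)\Bigr| \le \|(\alpha_1,\ldots,\alpha_d)\|_q\,\bigl\|(x^*(T_1x),\ldots,x^*(T_dx))\bigr\|_p \le w_p(\mathcal{T}),
\]
forcing equality in Hölder's inequality \emph{and} forcing $\|(x^*(T_1x),\ldots,x^*(T_dx))\|_p = w_p(\mathcal{T})$, i.e. $(x,x^*)\in M_{w_p(\mathcal{T})}$. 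Equality in Hölder (together with the phase condition from the first inequality being an equality) pins down $\alpha_i$ to be exactly $\overline{x^*(T_ix)}|x^*(T_ix)|^{p-2}/w_p(\mathcal{T})^{p-1}$ up to the usual uniqueness of the conjugate exponent extremal; so $f\in\mathcal{B}$. Thus $E_{\mathcal{Z}^*}\cap J_{w_p}(\mathcal{T}) = \mathcal{B}$ and the theorem follows.

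The step I expect to require the most care is the uniqueness in the equality case of Hölder's inequality when $1 < p < \infty$: one must argue that equality in $|\sum \alpha_i z_i| \le \|\alpha\|_q\|z\|_p$ with $\|\alpha\|_q = 1$, $\|z\|_p = w_p(\mathcal{T})$ forces $\alpha_i$ to be the specific scalar above (not merely proportional to $|z_i|^{p-1}$ up to a common unimodular factor) — the common-phase freedom is exactly what is killed by requiring $\sum\alpha_i z_i$ to be a nonnegative real equal to $w_p(\mathcal{T})$, and the indices with $z_i = 0$ must get $\alpha_i = 0$ since $\|\alpha\|_q = 1$ would otherwise be violated while still having $\sum\alpha_i z_i = w_p(\mathcal{T})$ (this uses strict convexity of $\ell_q^d$ for $1<q<\infty$). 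A second, more bookkeeping-type point is justifying $J_{w_p}(\mathcal{T}) = co\bigl(E_{\mathcal{Z}^*}\cap J_{w_p}(\mathcal{T})\bigr)$: this is the standard fact that a closed face of a compact convex set is itself compact convex and its extreme points are extreme points of the ambient set lying in the face, combined with Krein--Milman in the finite-dimensional setting where $B_{\mathcal{Z}^*} = co(E_{\mathcal{Z}^*})$ as in Corollary \ref{cor-01}. Neither obstacle is deep, but the Hölder equality analysis is where the explicit form of $\mathcal{B}$ is actually produced.
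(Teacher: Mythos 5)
Your argument is correct, and its engine is the same as the paper's: the Hölder equality analysis showing that any $f=(\alpha_1x^*\otimes x,\ldots,\alpha_dx^*\otimes x)$ of the Section~2 form with $f(\mathcal{T})=w_p(\mathcal{T})$ must have $(x,x^*)\in M_{w_p(\mathcal{T})}$ and $\alpha_i=\overline{x^*(T_ix)}|x^*(T_ix)|^{p-2}/w_p(\mathcal{T})^{p-1}$, including the correct handling of the common phase and of the indices with $x^*(T_ix)=0$. Where you differ is in the bookkeeping for the reverse inclusion: the paper applies Corollary \ref{cor-01} directly, writing an arbitrary $f\in J_{w_p}(\mathcal{T})$ as a convex combination $\sum t_jf_j$ with $f_j\in\mathcal{U}_d$ and using the averaging identity $w_p(\mathcal{T})=\sum t_jf_j(\mathcal{T})$ to force each $f_j\in J_{w_p}(\mathcal{T})$, whereas you invoke the face structure of $J_{w_p}(\mathcal{T})$ in $B_{\mathcal{Z}^*}$ together with Krein--Milman/Minkowski and Theorem \ref{th-02} to write $J_{w_p}(\mathcal{T})=co\bigl(E_{\mathcal{Z}^*}\cap J_{w_p}(\mathcal{T})\bigr)$; the paper's averaging step is really the same face property in disguise, and its route is slightly more self-contained since it needs only Corollary \ref{cor-01}. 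One caveat: your concluding identity $E_{\mathcal{Z}^*}\cap J_{w_p}(\mathcal{T})=\mathcal{B}$ (and the remark that each element of $\mathcal{B}$ ``lies in $E_{\mathcal{Z}^*}\cap J_{w_p}(\mathcal{T})$'') overstates what is available: Theorem \ref{th-02} gives only $E_{\mathcal{Z}^*}\subseteq\mathcal{U}_d$, and $\mathcal{U}_d\subseteq E_{\mathcal{Z}^*}$ is proved in the paper only under the extra hypotheses of Theorem \ref{th-05}, so in general you may only claim $E_{\mathcal{Z}^*}\cap J_{w_p}(\mathcal{T})\subseteq\mathcal{U}_d\cap J_{w_p}(\mathcal{T})=\mathcal{B}$. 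This weaker inclusion is all your proof actually uses --- the chain $J_{w_p}(\mathcal{T})=co\bigl(E_{\mathcal{Z}^*}\cap J_{w_p}(\mathcal{T})\bigr)\subseteq co(\mathcal{B})\subseteq J_{w_p}(\mathcal{T})$ closes the argument --- so the theorem is proved, but the equality with $E_{\mathcal{Z}^*}$ should be dropped or restricted to the setting of Corollary \ref{cor-11}.
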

\begin{proof}
We first show that $co(\mathcal{B})\subseteq J_{w_p}(\mathcal{T}).$
Let $$f=\frac{1}{w_p(\mathcal{T})^{p-1}}\bigg(\overline{x^*(T_1x)}|x^*(T_1x)|^{p-2}x^*\otimes x,\ldots,\overline{x^*(T_dx)}|x^*(T_dx)|^{p-2}x^*\otimes x\bigg)\in \mathcal{B}.$$ Note that, $\frac{1}{w_p(\mathcal{T})^{p-1}}\bigg(\overline{x^*(T_1x)}|x^*(T_1x)|^{p-2},\ldots,\overline{x^*(T_dx)}|x^*(T_dx)|^{p-2}\bigg)\in S_{\ell_q^d},$ since $ (x,x^*)\in M_{w_p(\mathcal{T})}.$ Therefore $f\in \mathcal{U}_d\subseteq B_{\mathcal{Z}^*}.$ Now,
\begin{eqnarray*}
	f(\mathcal{T})&=&\frac{1}{w_p(\mathcal{T})^{p-1}}\sum_{i=1}^d\overline{x^*(T_ix)}|x^*(T_ix)|^{p-2}x^*\otimes x(T_i)\\
	&=&\frac{1}{w_p(\mathcal{T})^{p-1}}\sum_{i=1}^d|x^*(T_ix)|^{p}\\
	&=& w_p(\mathcal{T}),\quad \quad \quad (\text{since } (x,x^*)\in M_{w_p(\mathcal{T})}),
	\end{eqnarray*}
which implies that $\|f\|=1$ and $f\in J_{w_p}(\mathcal{T}).$ Thus, $\mathcal{B}\subseteq J_{w_p}(\mathcal{T}).$ Since $J_{w_p}(\mathcal{T})$ is a convex set, so $co(\mathcal{B})\subseteq J_{w_p}(\mathcal{T}).$\\
On the other hand, suppose $f\in J_{w_p}(\mathcal{T}).$ Then by Corollary \ref{cor-01}, $f\in co(\mathcal{U}_d).$ So there exist $t_i>0,f_i\in \mathcal{U}_d$ for $1\leq i\leq m$ such that $t_1+\ldots+t_m=1$ and $f=t_1f_1+\ldots+t_mf_m.$ Since $\|f_i\|\leq 1,$ so $f_i(\mathcal{T})\leq w_p(\mathcal{T})$ for all $1\leq i\leq m.$ Now from
\[w_p(\mathcal{T})=f(\mathcal{T})=\sum_{i=1}^mt_if_i(\mathcal{T})\leq \sum_{i=1}^mt_iw_p(\mathcal{T})=w_p(\mathcal{T}),\]
it follows that $f_i(\mathcal{T})=w_p(\mathcal{T}),$ that is $f_i\in J_{w_p}(\mathcal{T})$ for all $1\leq i\leq m.$  Since $f_i\in \mathcal{U}_d,$ assume that $f_i=\bigg(\alpha_{1i}x_i^*\otimes x_i,\ldots,\alpha_{di}x_i^*\otimes x_i\bigg),$ where $y_i=(\alpha_{1i},\ldots,\alpha_{di})\in S_{\ell_q^d},$  $x_i^*\in E_{\mathcal{X}^*},  x_i\in E_{\mathcal{X}}$ and $x_i^*(x_i)=1.$ Now,
\[w_p(\mathcal{T})=f_i(\mathcal{T})=\sum_{j=1}^d\alpha_{ji}x_i^*(T_jx_i)\leq \|y_i\|_q\|(x_i^*(T_1x_i),\ldots,x_i^*(T_dx_i))\|_p\leq w_p(\mathcal{T}),\] and so from the equality condition, we have
\[w_p(\mathcal{T})=\|(x_i^*(T_1x_i),\ldots,x_i^*(T_dx_i))\|_p=\Big(\sum_{j=1}^d|x_i^*(T_jx_i)|^p\Big)^{\frac{1}{p}},\]
that is $(x_i,x_i^*)\in M_{w_p(\mathcal{T})}.$
Moreover,  for all $1\leq j\leq d,$
\[\alpha_{ji}=\frac{1}{w_p(\mathcal{T})^{p-1}}\overline{x_i^*(T_jx_i)}|x_i^*(T_jx_i)|^{p-2}.\]
Thus $f_i\in \mathcal{B},$ which proves that $f\in co(\mathcal{B}),$ and so $J_{w_p}(\mathcal{T})\subseteq co(\mathcal{B}).$
\end{proof}

Several corollaries of Theorem \ref{th-03} are in order now. As a straightforward consequence of Theorem \ref{th-03} and (\ref{eq-03}), we get the following expression for the right-hand and left-hand Gateaux derivatives of a tuple in the direction of another tuple.
\begin{cor}\label{cor-02}
	Let $\dim(\mathcal{X})<\infty.$ Then for $\mathcal{T}=(T_1,\ldots,T_d),\mathcal{S}=(S_1,\ldots,S_d) \in \mathcal{Z},$ 
		\begin{eqnarray*}
		G_+(\mathcal{T},\mathcal{S})
		&=&\frac{1}{w_p(\mathcal{T})^{p-1}}\max\{\mu:\mu\in C_p\}, \text{ and }\\
		G_-(\mathcal{T},\mathcal{S})
		&=&\frac{1}{w_p(\mathcal{T})^{p-1}}\min\{\mu:\mu\in C_p\}, \text{ where}
			\end{eqnarray*}
		\[C_p=\bigg\{\sum_{i=1}^d\Re\bigg(\overline{x^*(T_ix)}|x^*(T_ix)|^{p-2}x^*(S_ix) \bigg):(x,x^*)\in M_{w_p(\mathcal{T})}, x\in E_{\mathcal{X}},x^*\in E_{\mathcal{X}^*}\bigg\}.\]
\end{cor}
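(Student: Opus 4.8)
The plan is to derive Corollary \ref{cor-02} directly from Theorem \ref{th-03} together with the formula (\ref{eq-03}) for the one-sided Gateaux derivatives in terms of the subdifferential set. Since $\dim(\mathcal{X})<\infty$, Theorem \ref{th-03} gives $J_{w_p}(\mathcal{T})=co(\mathcal{B})$, where $\mathcal{B}$ is the compact set of ``elementary'' functionals indexed by $(x,x^*)\in M_{w_p(\mathcal{T})}$ with $x\in E_{\mathcal{X}}$, $x^*\in E_{\mathcal{X}^*}$. First I would recall that by (\ref{eq-03}), $G_+(\mathcal{T},\mathcal{S})=\max\{\Re f(\mathcal{S}):f\in J_{w_p}(\mathcal{T})\}$ and $G_-(\mathcal{T},\mathcal{S})=\min\{\Re f(\mathcal{S}):f\in J_{w_p}(\mathcal{T})\}$.

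The key observation is that $f\mapsto \Re f(\mathcal{S})$ is a real-linear (hence affine) functional on $\mathcal{Z}^*$, so its maximum and minimum over the convex hull $co(\mathcal{B})$ are attained on $\mathcal{B}$ itself: $\max\{\Re f(\mathcal{S}):f\in co(\mathcal{B})\}=\max\{\Re f(\mathcal{S}):f\in \mathcal{B}\}$, and similarly for the minimum. (One should note $M_{w_p(\mathcal{T})}$ is nonempty and $\mathcal{B}$ is compact so the extrema exist, though for the identity of the values even just the affine-on-convex-hull fact suffices.) Then I would simply evaluate $\Re f(\mathcal{S})$ for a generic $f\in\mathcal{B}$: writing
\[
f=\frac{1}{w_p(\mathcal{T})^{p-1}}\bigl(\overline{x^*(T_1x)}|x^*(T_1x)|^{p-2}x^*\otimes x,\ldots,\overline{x^*(T_dx)}|x^*(T_dx)|^{p-2}x^*\otimes x\bigr),
\]
the definition of the action of $(\alpha_1 x^*\otimes x,\ldots,\alpha_d x^*\otimes x)$ on a tuple gives
\[
f(\mathcal{S})=\frac{1}{w_p(\mathcal{T})^{p-1}}\sum_{i=1}^d\overline{x^*(T_ix)}|x^*(T_ix)|^{p-2}x^*(S_ix),
\]
and taking real parts produces exactly $\frac{1}{w_p(\mathcal{T})^{p-1}}$ times an element of $C_p$. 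Conversely every element of $C_p$ arises this way, so $\Re(\mathcal{B}(\mathcal{S}))=\frac{1}{w_p(\mathcal{T})^{p-1}}C_p$, and substituting into the max/min expressions yields the claimed formulas.

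There is no serious obstacle here: the content is entirely carried by Theorem \ref{th-03} and the elementary fact that an affine functional on a convex hull attains its extrema on the generating set. The only minor point to handle carefully is making sure the real-part operation commutes with the finite sum and with the affine combination $f=\sum t_i f_i$ — i.e.\ $\Re f(\mathcal{S})=\sum t_i \Re f_i(\mathcal{S})$ — which is immediate since the $t_i$ are nonnegative reals. One could also remark that $w_p(\mathcal{T})\neq 0$ is implicitly assumed (otherwise $\mathcal{T}=0$ and the derivatives are trivial / the formula is vacuous), so dividing by $w_p(\mathcal{T})^{p-1}$ is legitimate.
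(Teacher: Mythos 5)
Your proposal is correct and follows essentially the same route as the paper: the paper also combines Theorem \ref{th-03} with (\ref{eq-03}), and its chain of inequalities $G_+(\mathcal{T},\mathcal{S})=\Re\{f(\mathcal{S})\}=\sum_j t_j\Re\{f_j(\mathcal{S})\}\leq G_+(\mathcal{T},\mathcal{S})$ is precisely your observation that the real-linear functional $f\mapsto\Re f(\mathcal{S})$ attains its extrema over $co(\mathcal{B})$ on $\mathcal{B}$ itself. Nothing is missing; your remarks on attainment and on $w_p(\mathcal{T})\neq 0$ are appropriate minor clarifications.
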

\begin{proof}
Suppose $G_+(\mathcal{T},\mathcal{S})=\Re\{f(\mathcal{S})\}$ for some $f\in J_{w_p}(\mathcal{T}).$	Then by Theorem \ref{th-03}, there exist $t_j>0$ and $f_j\in \mathcal{B}\subseteq J_{w_p}(\mathcal{T})$ for $1\leq j\leq n$ such that $t_1+\ldots+t_n=1$ and $f=\sum_{j=1}^nt_jf_j.$ By (\ref{eq-03}), $\Re\{f_j(\mathcal{S})\}\leq G_+(\mathcal{T},\mathcal{S}).$ Therefore,
\[G_+(\mathcal{T},\mathcal{S})=\Re\{f(\mathcal{S})\}=\Re\{\sum_{j=1}^nt_jf_j(\mathcal{S})\}\leq \sum_{j=1}^nt_jG_+(\mathcal{T},\mathcal{S})=G_+(\mathcal{T},\mathcal{S}),\] which shows that $G_+(\mathcal{T},\mathcal{S})=\Re\{f_j(\mathcal{S})\},$ for some $f_j\in \mathcal{B},$ and so
\begin{eqnarray*}
G_+(\mathcal{T},\mathcal{S})&=&\Re\{f_j(\mathcal{S})\}\\
&\leq& \max\{\Re\{g(\mathcal{S})\}:g\in \mathcal{B}\}\\
&\leq& \max\{\Re\{g(\mathcal{S})\}:g\in J_{w_p}(\mathcal{T})\}\\
&=&G_+(\mathcal{T},\mathcal{S})\\
\Rightarrow G_+(\mathcal{T},\mathcal{S})&=&  \max\{\Re\{g(\mathcal{S})\}:g\in \mathcal{B}\}\\
&=&\frac{1}{w_p(\mathcal{T})^{p-1}}\max\{\mu:\mu\in C_p\}.
\end{eqnarray*}
Similarly, we get the expression for $G_-(\mathcal{T},\mathcal{S}).$
\end{proof}

In particular, if $p=2,$ then Theorem \ref{th-03} and Corollary \ref{cor-02} assume the following simpler form.
\begin{cor}\label{cor-03}
		Let $\dim(\mathcal{X})<\infty.$ Then for $\mathcal{T}=(T_1,\ldots,T_d)\in \mathcal{Z},$ 
	\begin{eqnarray*}
		J_{w_2}(\mathcal{T})
		&=&co\bigg\{\frac{1}{w_2(\mathcal{T})}\bigg(\overline{x^*(T_1x)}x^*\otimes x,\ldots,\overline{x^*(T_dx)}x^*\otimes x\bigg):\\
		&&\quad \quad \quad \quad \quad \quad(x,x^*)\in M_{w_2(\mathcal{T})}, x\in E_{\mathcal{X}},x^*\in E_{\mathcal{X}^*}\bigg\}.
	\end{eqnarray*}
For each $\mathcal{S}=(S_1,\ldots,S_d) \in \mathcal{Z},$ 
	\begin{eqnarray*}
	G_+(\mathcal{T},\mathcal{S})
	&=&\frac{1}{w_2(\mathcal{T})}\max\{\mu:\mu\in C_2\}, \text{ and }\\
	G_-(\mathcal{T},\mathcal{S})
	&=&\frac{1}{w_2(\mathcal{T})}\min\{\mu:\mu\in C_2\}, \text{ where}
\end{eqnarray*}
\[C_2=\bigg\{\sum_{i=1}^d\Re\bigg(\overline{x^*(T_ix)}x^*(S_ix) \bigg):(x,x^*)\in M_{w_2(\mathcal{T})}, x\in E_{\mathcal{X}},x^*\in E_{\mathcal{X}^*}\bigg\}.\]
\end{cor}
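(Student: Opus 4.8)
The plan is to obtain Corollary \ref{cor-03} purely as a specialization of Theorem \ref{th-03} and Corollary \ref{cor-02} to the exponent $p=2$; no new machinery should be needed. First I would set $p=2$ in the defining formulas. Since $q$ is the conjugate exponent and $\frac1p+\frac1q=1$, we have $q=2$ as well, so $S_{\ell_q^d}$ and $B_{\ell_q^d}$ become the Euclidean sphere and ball. The crucial simplification is that the factor $|x^*(T_ix)|^{p-2}$ appearing throughout reduces to $|x^*(T_ix)|^{0}=1$ whenever $x^*(T_ix)\neq 0$; and when $x^*(T_ix)=0$ the whole term $\overline{x^*(T_ix)}|x^*(T_ix)|^{p-2}$ is interpreted as $0$ anyway, so in either case $\overline{x^*(T_ix)}|x^*(T_ix)|^{p-2}=\overline{x^*(T_ix)}$. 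Likewise $w_p(\mathcal{T})^{p-1}=w_2(\mathcal{T})^{1}=w_2(\mathcal{T})$. Substituting these two observations into the expression for $\mathcal{B}$ in Theorem \ref{th-03} immediately yields the stated formula for $J_{w_2}(\mathcal{T})=co(\mathcal{B})$.

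Next I would do the same substitution in Corollary \ref{cor-02}. Setting $p=2$ turns the summand $\Re\bigl(\overline{x^*(T_ix)}|x^*(T_ix)|^{p-2}x^*(S_ix)\bigr)$ into $\Re\bigl(\overline{x^*(T_ix)}\,x^*(S_ix)\bigr)$, so the set $C_p$ becomes exactly the set $C_2$ displayed in the statement, and the prefactor $w_p(\mathcal{T})^{-(p-1)}$ becomes $w_2(\mathcal{T})^{-1}$. Hence the formulas for $G_+(\mathcal{T},\mathcal{S})$ and $G_-(\mathcal{T},\mathcal{S})$ follow verbatim from Corollary \ref{cor-02}. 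This is genuinely just bookkeeping — there is nothing to prove beyond checking that the $p=2$ substitution is carried out consistently and that $M_{w_p(\mathcal{T})}$ specializes to $M_{w_2(\mathcal{T})}$, which it does by definition.

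The only point that warrants a sentence of care, and the one I would flag as the "obstacle" (really just a convention check), is the treatment of the indeterminate expression $\overline{z}\,|z|^{p-2}$ at $z=0$ when $p>2$; but for $p=2$ this subtlety disappears entirely since $|z|^{0}=1$ for $z\neq 0$ and the product is $0$ for $z=0$, so no separate convention is required. Consequently the proof is a single short paragraph: \emph{The conclusions follow from Theorem \ref{th-03} and Corollary \ref{cor-02} by putting $p=2$ (so that $q=2$ and $p-1=1$), upon observing that $|x^*(T_ix)|^{p-2}=1$ whenever $x^*(T_ix)\neq 0$ and $\overline{x^*(T_ix)}\,|x^*(T_ix)|^{p-2}=0=\overline{x^*(T_ix)}$ whenever $x^*(T_ix)=0$.} I would write exactly that, perhaps expanding the substitution for $\mathcal{B}$ and for $C_p$ explicitly on one line each so the reader can see the formulas line up, and then stop.
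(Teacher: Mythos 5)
Your proposal is correct and follows exactly the route the paper intends: Corollary \ref{cor-03} is stated there as an immediate specialization of Theorem \ref{th-03} and Corollary \ref{cor-02} to $p=2$ (so $q=2$, $p-1=1$, and $\overline{x^*(T_ix)}|x^*(T_ix)|^{p-2}$ collapses to $\overline{x^*(T_ix)}$), with no further argument given or needed. Your extra remark about the term vanishing when $x^*(T_ix)=0$ is a harmless consistency check and does not change the substance.
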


Observe that if $d=1,$ then $w_p(\cdot)=w(\cdot)$ for all $1<p<\infty.$ Therefore, from Theorem \ref{th-03}, we get the following description of the  subdifferential set $J_w(T)=\{f\in S_{\mathcal{Z}^*}:f(T)=w(T)\}$ of the numerical radius of an operator $T$.

\begin{cor}\label{cor-10}
	Let $\dim(\mathcal{X})<\infty.$ Then for $T\in (\mathcal{L}(\mathcal{X}),w(\cdot)),$ 
	\begin{eqnarray*}
		J_{w}(T)
		=co\bigg\{\frac{1}{w(T)}\overline{x^*(Tx)}x^*\otimes x:(x,x^*)\in \Pi, x\in E_{\mathcal{X}},x^*\in E_{\mathcal{X}^*}, |x^*(Tx)|=w(T)\bigg\}.
	\end{eqnarray*}
\end{cor}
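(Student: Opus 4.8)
The plan is to obtain Corollary \ref{cor-10} as the special case $d=1$ of Theorem \ref{th-03}, so the only real work is to check that the two descriptions of the subdifferential set coincide when there is a single operator. First I would observe that, when $d=1$, the joint numerical radius $w_p(\cdot)$ reduces to the ordinary numerical radius $w(\cdot)$ for every $1<p<\infty$; indeed $w_p(T) = \sup\{(|x^*(Tx)|^p)^{1/p}:(x,x^*)\in\Pi\} = \sup\{|x^*(Tx)|:(x,x^*)\in\Pi\} = w(T)$. Consequently the space $\mathcal{Z} = (\mathcal{L}(\mathcal{X})^1, w_p(\cdot))$ is just $(\mathcal{L}(\mathcal{X}), w(\cdot))$, and $J_{w_p}(T) = J_w(T)$ by the very definitions of these sets.

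Next I would specialize the set $M_{w_p(\mathcal{T})}$ to $d=1$. By definition $M_{w_p(T)} = \{(x,x^*)\in\Pi : |x^*(Tx)| = w_p(T)\}$, which, using $w_p(T)=w(T)$, is exactly $\{(x,x^*)\in\Pi : |x^*(Tx)| = w(T)\}$. Then I would write out the generating set $\mathcal{B}$ from Theorem \ref{th-03} in this case: a typical element is
\[
\frac{1}{w(T)^{p-1}}\,\overline{x^*(Tx)}\,|x^*(Tx)|^{p-2}\, x^*\otimes x,
\]
where $(x,x^*)\in M_{w_p(T)}$, $x\in E_{\mathcal{X}}$, $x^*\in E_{\mathcal{X}^*}$. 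Since for such a pair $|x^*(Tx)| = w(T)$, we have $|x^*(Tx)|^{p-2} = w(T)^{p-2}$, so the scalar coefficient simplifies to
\[
\frac{w(T)^{p-2}}{w(T)^{p-1}}\,\overline{x^*(Tx)} = \frac{1}{w(T)}\,\overline{x^*(Tx)}.
\]
Hence $\mathcal{B}$ collapses precisely to the set $\{\frac{1}{w(T)}\overline{x^*(Tx)}\,x^*\otimes x : (x,x^*)\in\Pi,\ x\in E_{\mathcal{X}},\ x^*\in E_{\mathcal{X}^*},\ |x^*(Tx)| = w(T)\}$ appearing in Corollary \ref{cor-10}. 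Taking convex hulls of both sides and invoking $J_{w_p}(T)=co(\mathcal{B})$ from Theorem \ref{th-03} together with $J_{w_p}(T)=J_w(T)$ finishes the argument.

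There is essentially no obstacle here: the entire corollary is bookkeeping, reducing the $d$-tuple formula to a single operator and cancelling the power of $w(T)$ that appears because $|x^*(Tx)|$ is constant (equal to $w(T)$) on the numerical-radius-attaining set. The one point worth a sentence of care is the degenerate possibility $x^*(Tx)=0$ on $M_{w_p(T)}$, which forces $w(T)=0$ and hence $T=0$; as in Theorem \ref{th-03} one simply assumes $\mathcal{T}\neq 0$ (equivalently $w(T)>0$) so that all the expressions are well-defined, and the formula $\overline{x^*(Tx)}|x^*(Tx)|^{p-2}$ is interpreted as $\overline{x^*(Tx)}/|x^*(Tx)|^{2-p}$ with the convention inherited from Theorem \ref{th-03}.
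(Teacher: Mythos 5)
Your proposal is correct and follows exactly the route the paper takes: Corollary \ref{cor-10} is obtained by specializing Theorem \ref{th-03} to $d=1$, noting $w_p(\cdot)=w(\cdot)$ there, and cancelling $|x^*(Tx)|^{p-2}=w(T)^{p-2}$ against $w(T)^{p-1}$ on the attaining set. Your extra remark on the degenerate case $w(T)=0$ is a reasonable (implicitly assumed) clarification and does not change the argument.
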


As an application of Theorem \ref{th-03}, we easily get the following characterization of Birkhoff-James orthogonality in $\mathcal{Z}.$

\begin{cor}\label{cor-05}
	Let $\dim(\mathcal{X})<\infty.$ Suppose  $\mathcal{V}$ is a subspace  of $\mathcal{Z},$ and $\mathcal{T}=(T_1,\ldots,T_d)\in \mathcal{Z}\setminus \mathcal{V}.$ Then the following are equivalent.\\
	\rm(i) $w_p(\mathcal{T}+\mathcal{S})\geq w_p(\mathcal{T})$  for all $\mathcal{S}=(S_1,\ldots,S_d)\in \mathcal{V}.$ \\
	\rm(ii) There exists $t_j>0, ~x_j\in E_{\mathcal{X}},~x_j^*\in E_{\mathcal{X}^*}$ for $1\leq j\leq n,$ such that  $t_1+\ldots+t_n=1,$ $(x_j,x_j^*)\in M_{w_p(\mathcal{T})}$ and  for all $\mathcal{S}=(S_1,\ldots,S_d)\in \mathcal{V},$
	\begin{eqnarray*}
	\sum_{j=1}^n\sum_{i=1}^dt_j\bigg(\overline{x_j^*(T_ix_j)}|x_j^*(T_ix_j)|^{p-2}x_j^*(S_ix_j) \bigg)=0.
	\end{eqnarray*}
\end{cor}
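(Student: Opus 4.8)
The plan is to derive Corollary \ref{cor-05} from Theorem \ref{th-03} together with the standard characterization of Birkhoff-James orthogonality to a subspace via supporting functionals. Recall the classical fact (James): for a subspace $\mathcal{V}$ of a Banach space and $\mathcal{T}\notin \mathcal{V}$, one has $w_p(\mathcal{T}+\mathcal{S})\geq w_p(\mathcal{T})$ for all $\mathcal{S}\in \mathcal{V}$ (i.e. $\mathcal{T}\perp_B \mathcal{V}$) if and only if there exists $f\in J_{w_p}(\mathcal{T})$ with $f(\mathcal{S})=0$ for all $\mathcal{S}\in \mathcal{V}$, i.e. $f$ annihilates $\mathcal{V}$. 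Since the paper deals with possibly real scalars, the precise statement needed is in terms of $\Re\{f(\mathcal{S})\}$; but as $\mathcal{V}$ is a subspace, $\Re\{f(\mathcal{S})\}=0$ for all $\mathcal{S}\in\mathcal{V}$ forces $f(\mathcal{S})=0$ for all $\mathcal{S}\in\mathcal{V}$ (replace $\mathcal{S}$ by $i\mathcal{S}$ in the complex case). So the task reduces to unpacking what ``there exists $f\in J_{w_p}(\mathcal{T})$ annihilating $\mathcal{V}$'' means using the explicit description $J_{w_p}(\mathcal{T})=co(\mathcal{B})$.

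The key steps, in order, are as follows. First, assume (i). By the James characterization there is $f\in J_{w_p}(\mathcal{T})$ with $f|_{\mathcal{V}}=0$. By Theorem \ref{th-03}, $f=\sum_{j=1}^n t_j f_j$ with $t_j>0$, $\sum t_j=1$, and each
\[
f_j=\frac{1}{w_p(\mathcal{T})^{p-1}}\Big(\overline{x_j^*(T_1x_j)}|x_j^*(T_1x_j)|^{p-2}x_j^*\otimes x_j,\ldots,\overline{x_j^*(T_dx_j)}|x_j^*(T_dx_j)|^{p-2}x_j^*\otimes x_j\Big)\in\mathcal{B},
\]
so $(x_j,x_j^*)\in M_{w_p(\mathcal{T})}$, $x_j\in E_{\mathcal{X}}$, $x_j^*\in E_{\mathcal{X}^*}$. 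Then for every $\mathcal{S}=(S_1,\ldots,S_d)\in\mathcal{V}$,
\[
0=f(\mathcal{S})=\sum_{j=1}^n t_j f_j(\mathcal{S})=\frac{1}{w_p(\mathcal{T})^{p-1}}\sum_{j=1}^n\sum_{i=1}^d t_j\,\overline{x_j^*(T_ix_j)}|x_j^*(T_ix_j)|^{p-2}x_j^*(S_ix_j),
\]
and multiplying by the positive constant $w_p(\mathcal{T})^{p-1}$ gives (ii). Conversely, assume (ii); define $f=\sum_{j=1}^n t_j f_j$ with $f_j\in\mathcal{B}$ built from the given data. Each $f_j\in J_{w_p}(\mathcal{T})$ by Theorem \ref{th-03}, hence $f\in J_{w_p}(\mathcal{T})$ by convexity, and the displayed identity in (ii) says exactly $f(\mathcal{S})=0$ for all $\mathcal{S}\in\mathcal{V}$. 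Then for any $\mathcal{S}\in\mathcal{V}$,
\[
w_p(\mathcal{T}+\mathcal{S})\geq \Re\{f(\mathcal{T}+\mathcal{S})\}=\Re\{f(\mathcal{T})\}=w_p(\mathcal{T}),
\]
using $\|f\|=1$ and $f(\mathcal{T})=w_p(\mathcal{T})$, which is (i).

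I expect the only genuine subtlety to be the direction (i) $\Rightarrow$ (ii): extracting an element of $J_{w_p}(\mathcal{T})$ that annihilates $\mathcal{V}$. This is exactly the Birkhoff-James/Hahn-Banach duality statement, and it should be invoked as a known result (it is used implicitly in the earlier papers \cite{GS,M2} the author builds on); alternatively one can cite that $\mathcal{T}\perp_B\mathcal{V}$ iff $\mathcal{V}\subseteq\ker f$ for some $f\in J_{w_p}(\mathcal{T})$. Everything else is a routine rewriting of $f=\sum t_j f_j$ using the formula for the elements of $\mathcal{B}$ and clearing the harmless positive factor $w_p(\mathcal{T})^{-(p-1)}$. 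No compactness or dimension argument beyond what Theorem \ref{th-03} already encapsulates is needed; the hypothesis $\dim(\mathcal{X})<\infty$ enters solely through the applicability of Theorem \ref{th-03}.
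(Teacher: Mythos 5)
Your proposal is correct and is essentially the paper's own argument: both directions rest on Theorem \ref{th-03}, with (ii) $\Rightarrow$ (i) handled identically by forming $f=\sum_j t_jf_j\in J_{w_p}(\mathcal{T})$ annihilating $\mathcal{V}$. The only difference is that where you invoke the standard duality fact that (i) is equivalent to the existence of some $f\in J_{w_p}(\mathcal{T})$ vanishing on $\mathcal{V}$, the paper proves this step inline by defining $f(\alpha\mathcal{T}+\mathcal{S})=\alpha w_p(\mathcal{T})$ on $\mathrm{span}\{\mathcal{T},\mathcal{V}\}$ and extending by Hahn--Banach, which is exactly the content of the result you cite.
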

\begin{proof}
(i) $\Rightarrow$ (ii) Consider $\mathcal{M}=span\{\mathcal{T},\mathcal{V}\}.$ Define $f:\mathcal{M}\to \mathbb{F}$ by $f(\alpha \mathcal{T}+\mathcal{S})=\alpha w_p(\mathcal{T})$ for all scalars $\alpha$ and $\mathcal{S}\in \mathcal{V}.$	Now, 
\[|f(\alpha \mathcal{T}+\mathcal{S})|=|\alpha| w_p(\mathcal{T})=w_p(\alpha \mathcal{T})\leq w_p(\alpha \mathcal{T}+\mathcal{S})\] implies that $f\in \mathcal{M}^*$ and $\|f\|\leq 1.$ Moreover, $f(\mathcal{T})=w_p(\mathcal{T})$ implies that $\|f\|=1.$ So there exists $g\in \mathcal{Z}^*$ such that $g|_\mathcal{M}=f$ and $\|g\|=\|f\|=1.$ Therefore $g\in J_{w_p}(\mathcal{T})$ and $g(\mathcal{S})=0$ for all $\mathcal{S}\in \mathcal{V}.$ Now (ii) follows from Theorem \ref{th-03}.\\

(ii) $\Rightarrow $ (i) Without loss of generality, assume that $w_p(\mathcal{T})=1.$ For $1\leq j\leq n,$ let
\[f_j=\bigg(\overline{x_j^*(T_1x_j)}|x_j^*(T_1x_j)|^{p-2}x_j^*\otimes x_j,\ldots,\overline{x_j^*(T_dx_j)}|x_j^*(T_dx_j)|^{p-2}x_j^*\otimes x_j\bigg)\]
and $f=\sum_{j=1}^nt_jf_j.$ Then by Theorem \ref{th-03}, $f\in J_{w_p}(\mathcal{T}).$ Moreover, (ii) implies that $f(\mathcal{S})=0$ for all $\mathcal{S}\in \mathcal{V},$ and so
\[w_p(\mathcal{T}+\mathcal{S})\geq f(\mathcal{T}+\mathcal{S})=f(\mathcal{T})=w_p(\mathcal{T}).\]
\end{proof}

In particular, if $\mathcal{V}=\mathbb{F}^d\mathcal{S}:=\{\lambda \mathcal{S}=(\lambda_1S_1,\ldots, \lambda_dS_d):\lambda=(\lambda_1,\ldots,\lambda_d)\in \mathbb{F}^d\},$ then Corollary \ref{cor-05} assumes the following form. The proof is straightforward. So we skip the proof here. 

\begin{cor}\label{cor-09}
	Let $\dim(\mathcal{X})<\infty.$ Suppose $~\mathcal{T}=(T_1,\ldots,T_d), \mathcal{S}=(S_1,\ldots,S_d)\in \mathcal{Z}$ and $\mathcal{T}\notin \mathbb{F}^d\mathcal{S}.$  Then the following are equivalent.\\
	\rm(i) $w_p(\mathcal{T}+\lambda\mathcal{S})\geq w_p(\mathcal{T})$  for all $\lambda \in \mathbb{F}^d.$  \\
	\rm(ii) There exists $t_j>0, ~x_j\in E_{\mathcal{X}},~x_j^*\in E_{\mathcal{X}^*}$ for $1\leq j\leq n,$ such that  $t_1+\ldots+t_n=1,$ $(x_j,x_j^*)\in M_{w_p(\mathcal{T})}$ and  
	\begin{eqnarray*}
		\sum_{j=1}^nt_j\bigg(\overline{x_j^*(T_ix_j)}|x_j^*(T_ix_j)|^{p-2}x_j^*(S_ix_j) \bigg)=0, \quad \forall ~1\leq i\leq d.
	\end{eqnarray*}
	\end{cor}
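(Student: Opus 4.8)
The plan is to derive Corollary~\ref{cor-09} as a direct specialization of Corollary~\ref{cor-05} to the subspace $\mathcal{V}=\mathbb{F}^d\mathcal{S}$. First I would observe that $\mathbb{F}^d\mathcal{S}=\{(\lambda_1 S_1,\ldots,\lambda_d S_d):(\lambda_1,\ldots,\lambda_d)\in\mathbb{F}^d\}$ is indeed a linear subspace of $\mathcal{Z}$ (it is the span of the $d$ tuples $(0,\ldots,S_i,\ldots,0)$, $1\le i\le d$), and that the hypothesis $\mathcal{T}\notin\mathbb{F}^d\mathcal{S}$ is exactly the hypothesis $\mathcal{T}\in\mathcal{Z}\setminus\mathcal{V}$ required in Corollary~\ref{cor-05}. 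Then condition (i) here, namely $w_p(\mathcal{T}+\lambda\mathcal{S})\ge w_p(\mathcal{T})$ for all $\lambda\in\mathbb{F}^d$, is literally condition (i) of Corollary~\ref{cor-05} for this choice of $\mathcal{V}$, since a general element of $\mathcal{V}$ has the form $(\lambda_1 S_1,\ldots,\lambda_d S_d)$.

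Next I would translate condition (ii). Corollary~\ref{cor-05}(ii) gives $t_j>0$, $x_j\in E_{\mathcal{X}}$, $x_j^*\in E_{\mathcal{X}^*}$ with $\sum_j t_j=1$, $(x_j,x_j^*)\in M_{w_p(\mathcal{T})}$, and
\[
\sum_{j=1}^n\sum_{i=1}^d t_j\,\overline{x_j^*(T_ix_j)}\,|x_j^*(T_ix_j)|^{p-2}\,x_j^*\big(S_i'x_j\big)=0
\]
for every $\mathcal{S}'=(S_1',\ldots,S_d')\in\mathcal{V}$. Writing $S_i'=\lambda_i S_i$ and interchanging the finite sums, this becomes
\[
\sum_{i=1}^d\lambda_i\Bigg(\sum_{j=1}^n t_j\,\overline{x_j^*(T_ix_j)}\,|x_j^*(T_ix_j)|^{p-2}\,x_j^*(S_ix_j)\Bigg)=0\quad\text{for all }(\lambda_1,\ldots,\lambda_d)\in\mathbb{F}^d.
\]
Since the $\lambda_i$ are arbitrary, taking $\lambda=e_i$ (the $i$-th standard basis vector) forces each inner sum to vanish, which is exactly Corollary~\ref{cor-09}(ii); conversely, if each inner sum vanishes then so does the whole expression for every $\lambda$. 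Thus the two versions of (ii) are equivalent, and the corollary follows. I do not expect any genuine obstacle here — the only point requiring (trivial) care is the bookkeeping of the double sum and the observation that testing against the coordinate vectors $e_i\in\mathbb{F}^d$ is enough to pass between the single vanishing condition over all $\lambda$ and the $d$ separate vanishing conditions. This is precisely why the statement says ``The proof is straightforward. So we skip the proof here.''
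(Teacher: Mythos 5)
Your proposal is correct and follows exactly the route the paper intends: specialize Corollary \ref{cor-05} to the subspace $\mathcal{V}=\mathbb{F}^d\mathcal{S}$ and pass between the single vanishing condition over all $\lambda\in\mathbb{F}^d$ and the $d$ coordinate-wise conditions by testing against the standard basis vectors. The paper omits the argument as ``straightforward,'' and your bookkeeping of the double sum supplies precisely the missing (routine) details.
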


For $p=2$ we have the following result, which is a  generalization of \cite[Th. 3.1]{M2}. 
\begin{cor}\label{cor-07}
	Let $\dim(\mathcal{X})<\infty.$ Suppose  $\mathcal{V}$ is a subspace  of $\mathcal{Z},$ and $\mathcal{T}=(T_1,\ldots,T_d)\in \mathcal{Z}\setminus \mathcal{V}.$  Then the following are equivalent.\\
	\rm(i) $w_2(\mathcal{T}+\mathcal{S})\geq w_2(\mathcal{T})$  for all $\mathcal{S}=(S_1,\ldots,S_d)\in \mathcal{V}.$ \\
	\rm(ii) There exists $t_j>0, ~x_j\in E_{\mathcal{X}},~x_j^*\in E_{\mathcal{X}^*}$ for $1\leq j\leq n,$ such that  $t_1+\ldots+t_n=1,$ $(x_j,x_j^*)\in M_{w_2(\mathcal{T})}$ and  for all $\mathcal{S}=(S_1,\ldots,S_d)\in \mathcal{V},$
	\begin{eqnarray*}
		\sum_{j=1}^n\sum_{i=1}^dt_j\bigg(\overline{x_j^*(T_ix_j)}x_j^*(S_ix_j) \bigg)=0.
	\end{eqnarray*}
\end{cor}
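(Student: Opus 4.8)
The plan is to deduce Corollary \ref{cor-07} as the special case $p=2$ of Corollary \ref{cor-05}. First I would observe that when $p=2$ the weight $|x_j^*(T_ix_j)|^{p-2}$ appearing in the sum in Corollary \ref{cor-05}(ii) reduces to $|x_j^*(T_ix_j)|^{0}=1$, so the displayed identity in Corollary \ref{cor-05}(ii) becomes exactly
\[
\sum_{j=1}^n\sum_{i=1}^d t_j\bigl(\overline{x_j^*(T_ix_j)}\,x_j^*(S_ix_j)\bigr)=0,
\]
and the condition $(x_j,x_j^*)\in M_{w_p(\mathcal{T})}$ becomes $(x_j,x_j^*)\in M_{w_2(\mathcal{T})}$. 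Thus the statements of Corollary \ref{cor-05}(i)--(ii) specialize verbatim to the statements of Corollary \ref{cor-07}(i)--(ii).

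The only genuinely substantive point is that $w_2(\cdot)$ is a legitimate instance of $w_p(\cdot)$ with $1<p<\infty$, so that Corollary \ref{cor-05} is applicable with $p=2$; since $2$ lies strictly between $1$ and $\infty$, this is immediate, and $q=2$ is the conjugate exponent. I would also note that the hypotheses match: $\dim(\mathcal{X})<\infty$, $\mathcal{V}$ a subspace of $\mathcal{Z}$, and $\mathcal{T}\in\mathcal{Z}\setminus\mathcal{V}$ are exactly the hypotheses of Corollary \ref{cor-05}. Hence the equivalence (i)$\Leftrightarrow$(ii) of Corollary \ref{cor-07} is obtained by setting $p=2$ in Corollary \ref{cor-05}, with no additional argument required.

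There is essentially no obstacle here; the entire content is bookkeeping with the exponent, and the value $p=2$ is chosen precisely because it makes the modulus-weight disappear. I would therefore present the proof as a one-line appeal to Corollary \ref{cor-05}. If one wanted to connect this with the earlier literature, one could additionally remark that for $d=1$ the space $\mathcal{Z}$ is $(\mathcal{L}(\mathcal{X}),w(\cdot))$ and the identity in (ii) collapses to $\sum_{j}t_j\,\overline{x_j^*(Tx_j)}\,x_j^*(Sx_j)=0$, recovering \cite[Th. 3.1]{M2}; but that comparison is not needed for the proof itself.

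\begin{proof}
Since $1<2<\infty$, the joint numerical radius $w_2(\cdot)$ is a particular case of $w_p(\cdot)$, and the conjugate exponent of $2$ is $q=2$. Applying Corollary \ref{cor-05} with $p=2$, the weight $|x_j^*(T_ix_j)|^{p-2}=|x_j^*(T_ix_j)|^0=1$, so condition (ii) of Corollary \ref{cor-05} becomes precisely condition (ii) of the present corollary, and $M_{w_p(\mathcal{T})}=M_{w_2(\mathcal{T})}$. The equivalence (i)$\Leftrightarrow$(ii) now follows directly from Corollary \ref{cor-05}.
\end{proof}
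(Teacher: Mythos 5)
Your proof is correct and is exactly the paper's approach: the paper offers no separate argument for this corollary, presenting it precisely as the $p=2$ specialization of Corollary \ref{cor-05}. (The only cosmetic point is that when $x_j^*(T_ix_j)=0$ the factor $|x_j^*(T_ix_j)|^{p-2}$ is not literally $1$ at $p=2$, but the whole term vanishes in both formulations, so the identification is still verbatim.)
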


Now in the setting of Hilbert space, Corollary \ref{cor-03} and Corollary \ref{cor-07} assume the following form, respectively.
\begin{cor}\label{cor-04}\cite[Th. 1.1]{GS}
	Let $\mathcal{H}$ be a Hilbert space, where $\dim(\mathcal{H})<\infty,$ and $\mathcal{Z}=(\mathcal{L}(\mathcal{H})^d,w_2(\cdot)).$ Then for $\mathcal{T}=(T_1,\ldots,T_d)\in \mathcal{Z},$ 
	\begin{eqnarray*}
		J_{w_2}(\mathcal{T})
		&=&co\bigg\{\frac{1}{w_2(\mathcal{T})}\bigg(\overline{\langle T_1x,x\rangle}x\otimes x,\ldots,\overline{\langle T_dx,x\rangle }x\otimes x\bigg): \\
		&& \quad \quad \quad \quad \|x\|=1, \sqrt{\sum_{i=1}^d|\langle T_ix,x\rangle|^2}=w_2(\mathcal{T})\bigg\}.
	\end{eqnarray*}
	For each $\mathcal{S}=(S_1,\ldots,S_d) \in \mathcal{Z},$ 
	\begin{eqnarray*}
	G_+(\mathcal{T},\mathcal{S})
	&=&\frac{1}{w_2(\mathcal{T})}\max\{\mu:\mu\in C_2\}, \text{ and }\\
	G_-(\mathcal{T},\mathcal{S})
	&=&\frac{1}{w_2(\mathcal{T})}\min\{\mu:\mu\in C_2\}, \text{ where}
\end{eqnarray*}
\[C_2=\bigg\{\sum_{i=1}^d\Re\bigg(\overline{\langle T_ix,x\rangle}\langle S_ix,x\rangle \bigg): \|x\|=1, \sqrt{\sum_{i=1}^d|\langle T_ix,x\rangle|^2}=w_2(\mathcal{T})\bigg\}.\]
\end{cor}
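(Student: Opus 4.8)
The plan is to deduce Corollary \ref{cor-04} as a direct specialization of Corollary \ref{cor-03} and Corollary \ref{cor-07} to the case $\mathcal{X}=\mathcal{H}$, a finite-dimensional Hilbert space. Since every Hilbert space is smooth, Corollary \ref{cor-11}(i) applies, so the description of $E_{\mathcal{Z}^*}$ in terms of $\mathcal{U}_d$ is available; but for the present statement I only need the already-established Theorem \ref{th-03} / Corollary \ref{cor-03}. The essential translation step is the Riesz representation: for $x\in S_{\mathcal{H}}$ and $x^*\in S_{\mathcal{H}^*}$ with $x^*(x)=1$, we have $x^*(z)=\langle z,x\rangle$ for all $z\in\mathcal{H}$, which forces $x^*$ to be uniquely determined by $x$. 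Hence the pair $(x,x^*)\in\Pi$ is parametrized simply by the unit vector $x$, and, as noted right before the statement, the functional $x^*\otimes x$ is written $x\otimes x$, acting by $x\otimes x(S)=\langle Sx,x\rangle$.

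First I would record that, because $\mathcal{H}$ is strictly convex, $E_{\mathcal{H}}=S_{\mathcal{H}}$, and likewise $E_{\mathcal{H}^*}=S_{\mathcal{H}^*}$; thus the side conditions $x\in E_{\mathcal{X}}$, $x^*\in E_{\mathcal{X}^*}$ in Corollary \ref{cor-03} become vacuous beyond $\|x\|=1$, and by the Riesz identification $x^*\in E_{\mathcal{H}^*}$ is automatic once $x^*(z)=\langle z,x\rangle$. Next I would rewrite each occurrence of $x^*(T_ix)$ as $\langle T_ix,x\rangle$ and each $x^*\otimes x$ as $x\otimes x$; the numerical-radius-attaining set $M_{w_2(\mathcal{T})}$ then becomes $\{x\in S_{\mathcal{H}}:\sqrt{\sum_{i=1}^d|\langle T_ix,x\rangle|^2}=w_2(\mathcal{T})\}$. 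Substituting $p=2$ (so that $|x^*(T_ix)|^{p-2}=1$ and $w_p(\mathcal{T})^{p-1}=w_2(\mathcal{T})$) into the formulas of Corollary \ref{cor-03} yields verbatim the claimed expressions for $J_{w_2}(\mathcal{T})$, $G_+(\mathcal{T},\mathcal{S})$, $G_-(\mathcal{T},\mathcal{S})$, and $C_2$.

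Concretely, I would argue as follows. By Corollary \ref{cor-03}, $J_{w_2}(\mathcal{T})=co(\mathcal{B}_2)$ where $\mathcal{B}_2$ consists of the tuples $\frac{1}{w_2(\mathcal{T})}\big(\overline{x^*(T_1x)}\,x^*\otimes x,\ldots,\overline{x^*(T_dx)}\,x^*\otimes x\big)$ over $(x,x^*)\in M_{w_2(\mathcal{T})}$ with $x\in E_{\mathcal{H}}$, $x^*\in E_{\mathcal{H}^*}$. Using Riesz representation, the map $x\mapsto(x,x^*)$ with $x^*=\langle\cdot,x\rangle$ is a bijection from $S_{\mathcal{H}}$ onto $\Pi$, and under it $M_{w_2(\mathcal{T})}$ corresponds to $\{\|x\|=1:\sqrt{\sum_i|\langle T_ix,x\rangle|^2}=w_2(\mathcal{T})\}$, $x^*(T_ix)$ to $\langle T_ix,x\rangle$, and $x^*\otimes x$ to $x\otimes x$; since $E_{\mathcal{H}}=S_{\mathcal{H}}$ and the associated $x^*$ is automatically in $E_{\mathcal{H}^*}=S_{\mathcal{H}^*}$, the extra extremality constraints drop out. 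This gives the stated formula for $J_{w_2}(\mathcal{T})$, and the formulas for $G_\pm(\mathcal{T},\mathcal{S})$ and $C_2$ follow from Corollary \ref{cor-03} under the same substitution.

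I do not anticipate a genuine obstacle here; this is a specialization argument rather than a new proof. The only point requiring a little care is the bookkeeping that shows the hypotheses $x\in E_{\mathcal{X}}$, $x^*\in E_{\mathcal{X}^*}$ of the abstract corollaries are automatically satisfied in the Hilbert-space setting (strict convexity together with the Riesz identification), so that no potential extreme points are lost in passing to the Hilbert-space formula. Everything else is a direct transcription of Corollary \ref{cor-03} (and Corollary \ref{cor-07}, for the orthogonality companion statement) using $\langle\cdot,\cdot\rangle$ in place of the duality pairing.
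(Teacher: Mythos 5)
Your proposal is correct and matches the paper's treatment: the paper obtains Corollary \ref{cor-04} exactly as a specialization of Corollary \ref{cor-03} to the finite-dimensional Hilbert space setting, using the Riesz identification $x^*=\langle\cdot\,,x\rangle$ (so $x^*\otimes x$ becomes $x\otimes x$) and the fact that smoothness/strict convexity of $\mathcal{H}$ makes the conditions $x\in E_{\mathcal{X}}$, $x^*\in E_{\mathcal{X}^*}$ automatic. Nothing further is needed.
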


Note that \cite[Th. 1.2]{GS} is a particular case of the following corollary.
\begin{cor}\label{cor-08}
	Let $\dim(\mathcal{H})<\infty$ and $\mathcal{Z}=(\mathcal{L}(\mathcal{H})^d,w_2(\cdot)).$ Suppose $\mathcal{V}$ is a subspace  of $\mathcal{Z}$ and $\mathcal{T}=(T_1,\ldots,T_d)\in \mathcal{Z}\setminus \mathcal{V}.$ Then the following are equivalent.\\
	\rm(i) $w_2(\mathcal{T}+\mathcal{S})\geq w_2(\mathcal{T})$  for all $\mathcal{S}=(S_1,\ldots,S_d)\in \mathcal{V}.$ \\
	\rm(ii) There exists $t_j>0, ~x_j\in S_{\mathcal{H}}$ for $1\leq j\leq n,$ such that  $t_1+\ldots+t_n=1,$ $ \sqrt{\sum_{i=1}^d|\langle T_ix,x\rangle|^2}=w_2(\mathcal{T})$ and  for all $\mathcal{S}=(S_1,\ldots,S_d)\in \mathcal{V},$
	\begin{eqnarray*}
		\sum_{j=1}^n\sum_{i=1}^dt_j\bigg(\overline{\langle T_ix_j, x_j\rangle }\langle S_ix_j, x_j\rangle \bigg)=0.
	\end{eqnarray*}
\end{cor}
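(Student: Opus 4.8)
The plan is to derive Corollary \ref{cor-08} as the Hilbert space specialization of Corollary \ref{cor-07}, exactly as Corollary \ref{cor-04} is obtained from Corollary \ref{cor-03}. First I would invoke the fact that a finite-dimensional Hilbert space $\mathcal{H}$ is smooth, and moreover $\mathcal{H}^* = \mathcal{H}$ (via the Riesz representation theorem), so the relevant extreme point conditions collapse: $E_{\mathcal{H}} = S_{\mathcal{H}}$ (Hilbert spaces are strictly convex) and likewise $E_{\mathcal{H}^*} = S_{\mathcal{H}^*}$. Thus in the statement of Corollary \ref{cor-07}, the conditions ``$x_j \in E_{\mathcal{X}}$, $x_j^* \in E_{\mathcal{X}^*}$'' become simply ``$x_j \in S_{\mathcal{H}}$'' once we use the Riesz identification $x_j^*(z) = \langle z, x_j\rangle$.

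Next I would translate each ingredient. Under the identification described in the paragraph preceding Corollary \ref{cor-04}, the functional $x^* \otimes x$ becomes $x \otimes x$ with $x\otimes x(S) = \langle Sx, x\rangle$, and the pairing $x_j^*(T_i x_j)$ becomes $\langle T_i x_j, x_j\rangle$, while $x_j^*(x_j) = \langle x_j, x_j\rangle = \|x_j\|^2 = 1$ is automatic for $x_j \in S_{\mathcal{H}}$, so the condition $x_j^*(x_j)=1$ is subsumed. Consequently the joint numerical radius attaining condition $(x_j, x_j^*) \in M_{w_2(\mathcal{T})}$ reads $\sqrt{\sum_{i=1}^d |\langle T_i x_j, x_j\rangle|^2} = w_2(\mathcal{T})$, and the orthogonality condition in Corollary \ref{cor-07}(ii),
\[
\sum_{j=1}^n \sum_{i=1}^d t_j\big(\overline{x_j^*(T_i x_j)}\, x_j^*(S_i x_j)\big) = 0,
\]
becomes precisely
\[
\sum_{j=1}^n \sum_{i=1}^d t_j\big(\overline{\langle T_i x_j, x_j\rangle}\, \langle S_i x_j, x_j\rangle\big) = 0,
\]
which is the condition appearing in Corollary \ref{cor-08}(ii). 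The equivalence (i) $\Leftrightarrow$ (ii) then transfers verbatim from Corollary \ref{cor-07}.

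I do not anticipate a genuine obstacle here — the result is a dictionary translation and requires only that one be careful about the Riesz identification and about noting that strict convexity of Hilbert space makes all the extreme-point hypotheses vacuous (they become ``unit vector''). The one point meriting a sentence of care is confirming that $w_2(\cdot)$ is indeed a norm on $\mathcal{L}(\mathcal{H})^d$ so that $\mathcal{Z}=(\mathcal{L}(\mathcal{H})^d, w_2(\cdot))$ falls under the standing assumptions of the paper; this holds because $w(\cdot)$ is a norm on $\mathcal{L}(\mathcal{H})$ for any complex Hilbert space (and the cited references cover the relevant real cases), and the joint version inherits this. Since the proof is a routine specialization, I would keep it to two or three lines, essentially: ``Apply Corollary \ref{cor-07} with $\mathcal{X}=\mathcal{H}$. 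Since $\mathcal{H}$ is strictly convex and smooth, $E_{\mathcal{H}}=S_{\mathcal{H}}$ and $E_{\mathcal{H}^*}=S_{\mathcal{H}^*}$; using the Riesz identification $x^*\otimes x = x\otimes x$ and $x_j^*(T_i x_j)=\langle T_i x_j,x_j\rangle$, the conditions in Corollary \ref{cor-07} reduce to those stated, completing the proof.''
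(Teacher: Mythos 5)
Your proposal is correct and matches the paper's (implicit) argument: the paper presents Corollary \ref{cor-08} as the Hilbert space specialization of Corollary \ref{cor-07}, using exactly the Riesz identification $x^*\otimes x = x\otimes x$ and the fact that strict convexity and smoothness of $\mathcal{H}$ make the extreme-point hypotheses reduce to $x_j\in S_{\mathcal{H}}$. Nothing further is needed.
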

Similarly, \cite[Th. 1.3]{GS} is a particular case of Corollary \ref{cor-09} in the Hilbert space setting and for $p=2.$ We would like to end the section with the remark that the distance of a tuple from a subspace of $\mathcal{Z}$ can now be obtained from Theorem \ref{th-02}  (or Theorem \ref{th-03}) and  \cite[Th. 2.1]{MP22} proceeding similarly as \cite[Th. 3.5]{M2}.

\section{Gateaux derivative of a tuple}
In this section, using Theorem \ref{th-03} we first characterize the smooth tuples in $\mathcal{Z},$ and then derive the Gateaux derivative of a tuple.
\begin{theorem}\label{th-04}
		Let $\dim(\mathcal{X})<\infty.$  Suppose $\mathcal{T}=(T_1,\ldots,T_d)\in \mathcal{Z}$ and $w_p(\mathcal{T})\neq 0.$ Then the following are equivalent.\\
		\rm(i) The numerical radius $w_p(\cdot)$ of $\mathcal{Z}$ is Gateaux differentiable at $\mathcal{T}.$\\
		\rm(ii) There exists $(x_0,x_0^*)\in \Pi$ such that $x_0\in E_{\mathcal{X}}, x_0^*\in E_{\mathcal{X}^*}$ and $$M_{w_p(\mathcal{T})}=\{(\mu x_0, \overline{\mu} x_0^*):\mu\in \mathbb{F},|\mu|=1\}.$$
		Moreover, in this case, for each $\mathcal{S}=(S_1,\ldots,S_d) \in \mathcal{Z},$
	\begin{eqnarray*}
		G(\mathcal{T},\mathcal{S})
		&=&\frac{1}{w_p(\mathcal{T})^{p-1}}\sum_{i=1}^d\Re\bigg(\overline{x_0^*(T_ix_0)}|x_0^*(T_ix_0)|^{p-2}x_0^*(S_ix_0) \bigg).
	\end{eqnarray*}
\end{theorem}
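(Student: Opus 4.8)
The plan is to leverage Theorem \ref{th-03}, which describes $J_{w_p}(\mathcal{T})$ as the convex hull of the set $\mathcal{B}$, and the standard fact that a vector is smooth (equivalently, the norm is Gateaux differentiable there) if and only if its subdifferential set is a singleton. So the task reduces to showing that $J_{w_p}(\mathcal{T})$ is a singleton precisely when $M_{w_p(\mathcal{T})}$ has the claimed ``one-dimensional'' form. First I would establish (ii) $\Rightarrow$ (i): assuming $M_{w_p(\mathcal{T})}=\{(\mu x_0,\overline{\mu}x_0^*):|\mu|=1\}$, I plug each such pair into the formula for $\mathcal{B}$ and check that the scalars $\overline{(\overline{\mu}x_0^*)(T_i(\mu x_0))}|(\overline{\mu}x_0^*)(T_i(\mu x_0))|^{p-2}$ and the rank-one functional $(\overline{\mu}x_0^*)\otimes(\mu x_0)$ are each independent of $\mu$ (the $\mu$ factors cancel, since $(\overline{\mu}x_0^*)(T_i\mu x_0)=\overline{\mu}\mu x_0^*(T_ix_0)=x_0^*(T_ix_0)$ and similarly $(\overline{\mu}x_0^*)\otimes(\mu x_0)=x_0^*\otimes x_0$). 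Hence $\mathcal{B}$ is a single functional $f_0$, so $J_{w_p}(\mathcal{T})=co(\mathcal{B})=\{f_0\}$, meaning $\mathcal{T}$ is smooth. The formula for $G(\mathcal{T},\mathcal{S})$ then follows immediately from $G(\mathcal{T},\mathcal{S})=\Re\{f_0(\mathcal{S})\}$ together with the definition of $\mathcal{B}$.

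\textbf{The converse.} For (i) $\Rightarrow$ (ii), I would argue contrapositively, or directly: suppose $w_p(\cdot)$ is Gateaux differentiable at $\mathcal{T}$, so $J_{w_p}(\mathcal{T})=\{f_0\}$ is a singleton, and hence by Theorem \ref{th-03} the set $\mathcal{B}$ itself must be the single point $\{f_0\}$. First pick any $(x_0,x_0^*)\in M_{w_p(\mathcal{T})}$ with $x_0\in E_{\mathcal{X}}$, $x_0^*\in E_{\mathcal{X}^*}$ (such a pair exists: the joint numerical radius is attained on $\Pi$ by compactness, and an extreme-point argument as in the proof of Theorem \ref{th-02} — or directly the fact that $E_{\mathcal{Z}^*}=\mathcal{U}_d$ type reasoning — lets one take the attaining pair to have both coordinates extreme; I would spell this out by noting that the functional built from any attaining pair lies in $J_{w_p}(\mathcal{T})$, which is a face of $B_{\mathcal{Z}^*}$, hence contains an extreme point of $B_{\mathcal{Z}^*}$, which by Theorem \ref{th-02} has the required extremal form). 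This identifies $f_0$ with the $\mathcal{B}$-representative of $(x_0,x_0^*)$. Now let $(x_1,x_1^*)\in M_{w_p(\mathcal{T})}$ be arbitrary with $x_1\in E_{\mathcal{X}}$, $x_1^*\in E_{\mathcal{X}^*}$; its $\mathcal{B}$-representative also equals $f_0$. Comparing the two: each coordinate gives $\overline{x_1^*(T_ix_1)}|x_1^*(T_ix_1)|^{p-2}\,x_1^*\otimes x_1 = \overline{x_0^*(T_ix_0)}|x_0^*(T_ix_0)|^{p-2}\,x_0^*\otimes x_0$ for all $i$. Since $(x_1,x_1^*)\in M_{w_p(\mathcal{T})}$ the $\ell_q^d$-norm of the coefficient vector is $w_p(\mathcal{T})^{p-1}\neq 0$, so at least one coefficient $\overline{x_1^*(T_ix_1)}|x_1^*(T_ix_1)|^{p-2}$ is nonzero; for that $i$ we get $x_1^*\otimes x_1 = \lambda\, x_0^*\otimes x_0$ for a nonzero scalar $\lambda$, and comparing norms of these rank-one functionals forces $|\lambda|=1$. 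Then \cite[Lem. 2.2]{M2} (the injectivity up to scalars of $(x,x^*)\mapsto x^*\otimes x$ on $\Pi$) yields $x_1=\mu x_0$, $x_1^*=\overline{\mu}x_0^*$ for some unimodular $\mu$.

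\textbf{Closing the gap.} The remaining point is that the description of $M_{w_p(\mathcal{T})}$ should hold for \emph{all} attaining pairs, not merely those with extreme coordinates. For a general $(x,x^*)\in M_{w_p(\mathcal{T})}$ the functional $f_x:=\frac{1}{w_p(\mathcal{T})^{p-1}}(\overline{x^*(T_1x)}|x^*(T_1x)|^{p-2}x^*\otimes x,\ldots)$ still lies in $J_{w_p}(\mathcal{T})=\{f_0\}$ (the computation at the start of the proof of Theorem \ref{th-03} used only $(x,x^*)\in M_{w_p(\mathcal{T})}$, not extremality), so $f_x=f_0$; repeating the coordinate-comparison above with $f_x$ in place of the $\mathcal{B}$-representative gives $x=\mu x_0$, $x^*=\overline{\mu}x_0^*$ as well. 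Conversely every such $(\mu x_0,\overline{\mu}x_0^*)$ manifestly lies in $\Pi$ and attains $w_p(\mathcal{T})$, giving the full equality $M_{w_p(\mathcal{T})}=\{(\mu x_0,\overline{\mu}x_0^*):|\mu|=1\}$. The main obstacle I anticipate is the bookkeeping in this last comparison step — in particular handling the case $p\neq 2$ where the map $t\mapsto \overline{t}|t|^{p-2}$ on $\mathbb{F}$ must be inverted (it is a homeomorphism of $\mathbb{F}$ onto itself, so $x^*(T_ix)$ is recovered from its coefficient), and making sure the ``at least one nonzero coordinate'' argument is watertight; everything else is a routine unwinding of Theorem \ref{th-03} plus \cite[Lem. 2.2]{M2} and formula (\ref{eq-03}).
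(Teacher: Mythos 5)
Your proposal is correct and follows essentially the same route as the paper: both directions reduce, via Theorem \ref{th-03}, to the fact that Gateaux differentiability of $w_p(\cdot)$ at $\mathcal{T}$ is equivalent to $J_{w_p}(\mathcal{T})$ (equivalently, the set $\mathcal{B}$) being a singleton, and then the equality of the rank-one functionals arising from two numerical-radius attaining pairs is shown to force $(y,y^*)=(\mu x_0,\overline{\mu}x_0^*)$, the reverse inclusion and the derivative formula being immediate. The only cosmetic difference is in the last micro-step: where you invoke \cite[Lem. 2.2]{M2} plus a norm comparison to resolve $x_1^*\otimes x_1=\lambda\, x_0^*\otimes x_0$, the paper tests the coordinatewise equality (\ref{eq-01}) on explicitly constructed operators ($Sx=0,\ Sy=y$, and $S=x^*(\cdot)u$ with $u\in\ker(x^*)\setminus\ker(y^*)$) to rule out linear independence; both devices prove the same fact.
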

\begin{proof}
	Without loss of generality, assume that $w_p(\mathcal{T})=1.$ \\
(i) $\Rightarrow $ (ii). Since $w_p(\cdot)$ is Gateaux differentiable at $\mathcal{T},$ $J_{w_p}(\mathcal{T})$ is singleton. Suppose $$J_{w_p}(\mathcal{T})=\bigg\{\bigg(\overline{x^*(T_1x)}|x^*(T_1x)|^{p-2}x^*\otimes x,\ldots,\overline{x^*(T_dx)}|x^*(T_dx)|^{p-2}x^*\otimes x\bigg)\bigg\}$$
for some $(x,x^*)\in M_{w_p(\mathcal{T})},$ where $x\in E_{\mathcal{X}}, x^*\in E_{\mathcal{X}^*}.$ We show that if
 $(y,y^*)\in M_{w_p(\mathcal{T})},$ then $(y,y^*)=(\mu x,\overline{\mu}x^*)$ for some scalar $\mu$ with $|\mu|=1.$ Clearly,
 \[\bigg(\overline{y^*(T_1y)}|y^*(T_1y)|^{p-2}y^*\otimes y,\ldots,\overline{y^*(T_dy)}|y^*(T_dy)|^{p-2}y^*\otimes y\bigg)\in J_{w_p}(\mathcal{T}).\]  
 Therefore for all $1\leq i\leq d$, 
 \begin{equation}\label{eq-01}
 \overline{y^*(T_iy)}|y^*(T_iy)|^{p-2}y^*\otimes y=\overline{x^*(T_ix)}|x^*(T_ix)|^{p-2}x^*\otimes x.
\end{equation}
 If possible, suppose that $\{x,y\}$ is linearly independent. Then there exists $S\in \mathcal{L}(\mathcal{X})$ such that $Sx=0$ and $Sy=y.$ Now from (\ref{eq-01}),
 \begin{eqnarray*}
 	 \overline{y^*(T_iy)}|y^*(T_iy)|^{p-2}y^*\otimes y(S)&=&\overline{x^*(T_ix)}|x^*(T_ix)|^{p-2}x^*\otimes x(S)\\
 	 \Rightarrow \overline{y^*(T_iy)}|y^*(T_iy)|^{p-2}&=&0, ~(\text{since } y^*(y)=1)\\
 	 \Rightarrow y^*(T_iy)&=&0,
 	\end{eqnarray*}
 which contradicts that $(y,y^*)\in M_{w_p(\mathcal{T})}.$ Therefore, assume that $y=\mu x.$ Since $y,x\in S_{\mathcal{X}},$ so $|\mu|=1.$ Now if possible, assume that $\{x^*,y^*\}$ is linearly independent. Choose $u\in \ker(x^*)\setminus \ker(y^*).$ Consider $S\in \mathcal{L}(\mathcal{X})$ defined by $S(z)=x^*(z)u$ for all $z\in \mathcal{X}.$ Again from (\ref{eq-01}), 
  \begin{eqnarray*}
 	\overline{y^*(T_iy)}|y^*(T_iy)|^{p-2}y^*\otimes y(S)&=&\overline{x^*(T_ix)}|x^*(T_ix)|^{p-2}x^*\otimes x(S)\\
 	\Rightarrow \overline{y^*(T_iy)}|y^*(T_iy)|^{p-2}y^*(\mu u)&=&0, ~(\text{since } x^*(x)=1)\\
 	\Rightarrow y^*(T_iy)&=&0,
 \end{eqnarray*}
 which contradicts that $(y,y^*)\in M_{w_p(\mathcal{T})}.$ Therefore, assume that $y^*=\lambda x^*.$  Now,
 \[1=y^*(y)=\lambda x^*(\mu x)=\lambda \mu\quad \Rightarrow \quad \lambda =\overline{\mu},\]
 which implies that $(y,y^*)=(\mu x,\overline{\mu}x^*).$ This proves (ii).\\
 
  (ii) $\Rightarrow$ (i).  In this case, clearly $J_{w_p}(\mathcal{T})$ is singleton. In particular,
  $$J_{w_p}(\mathcal{T})=\bigg\{\bigg(\overline{x_0^*(T_1x_0)}|x_0^*(T_1x_0)|^{p-2}x_0^*\otimes x_0,\ldots,\overline{x_0^*(T_dx_0)}|x_0^*(T_dx_0)|^{p-2}x_0^*\otimes x_0\bigg)\bigg\}$$
   and so $w_p(\cdot)$ is Gateaux differentiable at $\mathcal{T}.$\\
   
   The rest of the part follows from the fact that, $	G(\mathcal{T},\mathcal{S})=\Re(f(\mathcal{S})),$ if $J_{w_p}(\mathcal{T})=\{f\}.$ 
\end{proof}

We would like to remark here that \cite[Th. 3.7]{M2} is a particular case of the previous theorem. As an immediate consequence of Theorem \ref{th-04}, we get the following expression for the Gateaux derivative of the joint numerical radius of a tuple on a Hilbert space.
\begin{cor}\label{cor-06}
	Let $\mathcal{H}$ be a finite-dimensional Hilbert space, and $\mathcal{Z}=(\mathcal{L}(\mathcal{H})^d,w_p(\cdot)).$ Suppose  $w_p(\cdot)$ is Gateaux differentiable at $\mathcal{T}=(T_1,\ldots,T_d)\in \mathcal{Z}.$ Then for each $\mathcal{S}=(S_1,\ldots,S_d) \in \mathcal{Z},$ 
	\begin{eqnarray*}
			G(\mathcal{T},\mathcal{S})
		&=&\frac{1}{w_p(\mathcal{T})^{p-1}}\sum_{i=1}^d\Re\bigg(\overline{\langle T_ix_0,x_0\rangle} |\langle T_ix_0,x_0\rangle|^{p-2}\langle S_ix_0,x_0\rangle \bigg),
	\end{eqnarray*}
where $M_{w_p(\mathcal{T})}=\{(\mu x_0, \overline{\mu} x_0^*):\mu\in \mathbb{F},|\mu|=1\}.$
\end{cor}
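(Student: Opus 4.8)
The plan is to deduce Corollary~\ref{cor-06} as a direct specialization of Theorem~\ref{th-04}, so no fresh machinery is needed; the work consists only in translating the abstract Banach-space statement into the Hilbert-space language. First I would recall that a finite-dimensional Hilbert space $\mathcal{H}$ is smooth and strictly convex, hence $E_{\mathcal{H}}=S_{\mathcal{H}}$ and, via the Riesz representation theorem, $E_{\mathcal{H}^*}=S_{\mathcal{H}^*}$ with the canonical identification $x^*(z)=\langle z,x\rangle$. Under this identification the pair $(x,x^*)\in\Pi$ with $x^*(x)=1$ corresponds precisely to a single unit vector $x\in S_{\mathcal{H}}$ (since $\langle x,x\rangle=1$ forces the functional to be the one induced by $x$ itself), and the rank-one functional $x^*\otimes x$ becomes the functional $x\otimes x$ defined by $x\otimes x(S)=\langle Sx,x\rangle$, exactly as introduced in the remark following Corollary~\ref{cor-11}. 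Thus the joint numerical radius attaining set becomes $M_{w_p(\mathcal{T})}=\{(x,x^*): \|x\|=1,\ \sqrt[p]{\sum_{i=1}^d|\langle T_ix,x\rangle|^p}=w_p(\mathcal{T})\}$.

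Next, since $w_p(\cdot)$ is assumed Gateaux differentiable at $\mathcal{T}$, Theorem~\ref{th-04} applies and gives a vector $x_0\in E_{\mathcal{H}}=S_{\mathcal{H}}$ and $x_0^*\in E_{\mathcal{H}^*}$ with $M_{w_p(\mathcal{T})}=\{(\mu x_0,\overline{\mu}x_0^*):\mu\in\mathbb{F},|\mu|=1\}$; in the Hilbert-space identification $x_0^*$ is nothing but the functional $\langle\,\cdot\,,x_0\rangle$. Then I would simply substitute $x^*(T_ix)=\langle T_ix,x\rangle$ and $x^*(S_ix)=\langle S_ix,x\rangle$ (with $x=x_0$) into the formula
\[
G(\mathcal{T},\mathcal{S})=\frac{1}{w_p(\mathcal{T})^{p-1}}\sum_{i=1}^d\Re\bigg(\overline{x_0^*(T_ix_0)}\,|x_0^*(T_ix_0)|^{p-2}\,x_0^*(S_ix_0)\bigg)
\]
from Theorem~\ref{th-04}, obtaining the claimed expression
\[
G(\mathcal{T},\mathcal{S})=\frac{1}{w_p(\mathcal{T})^{p-1}}\sum_{i=1}^d\Re\bigg(\overline{\langle T_ix_0,x_0\rangle}\,|\langle T_ix_0,x_0\rangle|^{p-2}\,\langle S_ix_0,x_0\rangle\bigg).
\]

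There is essentially no obstacle here: the corollary is purely a matter of unwinding notation, and the only point requiring a sentence of care is justifying that for a Hilbert space the condition ``$x\in E_{\mathcal{X}}$ and $x^*\in E_{\mathcal{X}^*}$'' in Theorem~\ref{th-04} is automatically satisfied by every $(x,x^*)\in\Pi$, which follows from strict convexity of $\mathcal{H}$ and of $\mathcal{H}^*$. I would phrase the whole proof in two or three lines, citing Theorem~\ref{th-04} and the Riesz-representation identification, and leave the verification to the reader.
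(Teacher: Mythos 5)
Your proposal is correct and matches the paper's treatment: the paper states Corollary~\ref{cor-06} as an immediate consequence of Theorem~\ref{th-04}, obtained exactly by the Riesz-representation identification $x^*(z)=\langle z,x\rangle$ (so $x^*\otimes x$ becomes $x\otimes x$) together with the observation that smoothness and strict convexity of $\mathcal{H}$ and $\mathcal{H}^*$ make the extreme-point conditions automatic. Nothing further is needed.
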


\bibliographystyle{amsplain}

\end{document}